\newtheorem{thm}{Theorem}
\newtheorem{lem}{Lemma}
\newtheorem{prop}{Proposition}
\newtheorem{cor}{Corollary}
\newtheorem{theorem}{Theorem}[section]
\theoremstyle{definition}
\theoremstyle{remark}
\newtheorem{remark}[theorem]{Remark}
\numberwithin{equation}{section}
\begin{document}

\begin{frontmatter}[classification=text]

\title{A generalization of primitive sets and a conjecture of Erd\H{o}s} 

\author[THC]{Tsz Ho Chan}
\author[JDL]{Jared Duker Lichtman}
\author[CP]{Carl Pomerance}

\begin{abstract}
A set of integers greater than 1 is {\it primitive} if no element divides another. Erd\H{o}s proved in 1935 that the sum of $1/(n \log n)$ for $n$ running over a primitive set $A$ is universally bounded over all choices for $A$. In 1988 he asked if this universal bound is attained by the set of prime numbers. 
We answer the Erd\H{o}s question in the affirmative for 2-primitive sets. Here a set is 2-{\it primitive} if no element divides the product of 2 other elements. 
\end{abstract}
\end{frontmatter}

\section{Introduction and Statement of results}

\bigskip

A set of integers greater than $1$ is called primitive if no element divides any other. Erd\H{o}s \cite{E35} showed that there is a constant $K$ such that for any primitive set $A$,
\[
f(A):=\sum_{n \in A} \frac{1}{n \log n} \le K.
\]
Further, in 1988 he asked if $f(A)$ is maximized by the primes $A = \mathbb{P}$.  This is now referred to as the Erd\H os conjecture for primitive sets:
\[
\hbox{For $A$ primitive, we have }~f(A)\le f(\mathbb{P})=  \sum_{p \in \mathbb{P}} \frac{1}{p \log p} =: C = 1.636616\cdots,
\]
where $\mathbb{P}$ is the set of prime numbers. By a simple argument, the Erd\H{o}s conjecture is equivalent to the assertion that $f(A) \le f(\mathcal P(A))$ for any primitive set $A$, where $\mathcal{P}(A)$ denotes the set of primes dividing some member of $A$.

Recently, the second and third authors \cite{LP} proved that
\begin{thm} \label{thm0}
For any primitive set $A$,
\[
f(A) < e^\gamma = 1.781072\cdots
\]
where $\gamma = 0.5772\cdots$ is the Euler-Mascheroni constant.
Further, if $A$ does not contain a multiple of $8$,  then
\[
f(A) \le f(\mathcal{P}(A)) + 2.37 \times 10^{-7}.
\]
\end{thm}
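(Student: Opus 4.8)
Since the Erd\H{o}s conjecture is equivalent to the inequality $f(A)\le f(\mathcal P(A))$, the plan is to split $f(A)$ according to the least prime factor of its terms and to compare it prime by prime with $f(\mathcal P(A))=\sum_{q\in\mathcal P(A)}1/(q\log q)$, refining Erd\H{o}s's original counting argument with sharp estimates for Mertens' product. Writing $P^-(n)$ for the least prime factor of $n$ and $A_q=\{n\in A:P^-(n)=q\}$, we have $A=\bigsqcup_q A_q$, $f(A)=\sum_q f(A_q)$, and $\{q:A_q\ne\emptyset\}\subseteq\mathcal P(A)$, so $f(A)\le f(\mathcal P(A))$ would follow from the local bounds $f(A_q)\le 1/(q\log q)$. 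For each prime $q$ put $M_q=\{n/q:n\in A_q\}$; this is again a primitive set, every element of it has all prime factors $\ge q$, and $1\in M_q$ exactly when $q\in A$. A direct computation shows that $f(A_q)\le 1/(q\log q)$ is \emph{equivalent} to
\[
\sum_{m\in M_q}\frac1m\cdot\frac{\log q}{\log(qm)}\ \le\ 1 .
\]
The useful feature of this reformulation is the damping factor $\log q/\log(qm)$: it equals $1$ only when $m=1$, is at most $\tfrac12$ once $m\ge q$ (which holds for every $m\in M_q\setminus\{1\}$, since then $m>1$ has all prime factors $\ge q$), and decays to $0$ as $m\to\infty$.

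For an odd prime $q$ I would establish the displayed inequality by a sieve/recursion argument: one peels off the least prime factor $q'$ of the elements of $M_q$, recurses on the (again primitive) set of cofactors, and at each step picks up a Mertens product $\prod_{p\le q'}(1-1/p)$. Feeding in the sharp bounds $\frac{e^{-\gamma}}{\log x}\big(1-\tfrac1{\log^2x}\big)<\prod_{p\le x}(1-\tfrac1p)<\frac{e^{-\gamma}}{\log x}\big(1+\tfrac1{\log^2x}\big)$, together with the discount $\log n\ge\Omega(n)\log P^-(n)$, controls the resulting multiple series, and leaves enough room to conclude $f(A_q)\le 1/(q\log q)$ for every $q\ge 3$ after a finite verification for the first few primes. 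Summing over $q$ then reduces everything to the even part: $f(A)\le f(\mathcal P(A))+\max\{0,\,f(A_2)-f(2)\}$.

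The prime $q=2$ is the genuine obstacle, because $f(2)=1/(2\log2)$ is so large that the damping factor no longer obviously wins: one must show $\sum_{m\in M}\tfrac1m\cdot\tfrac{\log2}{\log(2m)}\le 1$ for an \emph{arbitrary} primitive $M$, and this lies right at the edge of what the method gives. Here the hypothesis ``$8\nmid n$'' is exactly what makes things tractable: for $n\in A_2$ it is equivalent to $v_2(n/2)\le1$, so $M=M_2$ lies in the union of the odd numbers with twice the odd numbers, and on this restricted family a finite optimization over the short configurations (those with small $m$, where the damping factor is largest) combined with a tail estimate pins the supremum of the sum at $1+2.37\times10^{-7}$ — whence $f(A)\le f(\mathcal P(A))+2.37\times10^{-7}$.

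Without the hypothesis, $M$ may contain arbitrarily high powers of $2$, and the best the same method supplies is the weaker bound $\sum_{m\in M}\tfrac1m\cdot\tfrac{\log2}{\log(2m)}\le 1+2\log2\,(e^{\gamma}-C)$, which still yields $f(A)<e^{\gamma}$; the constant $e^{\gamma}$ is exactly the reciprocal of the Mertens constant $e^{-\gamma}$. I expect the main difficulty throughout to be making this $q=2$ estimate sharp: a loss exceeding $e^\gamma-C$ in the bound for $f(A_2)$ would break the unconditional statement, and a loss exceeding roughly $10^{-7}$ would break the conditional one, so the endgame demands a careful accounting of the small-$m$ terms.
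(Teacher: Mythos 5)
First, a point of reference: this paper does not prove Theorem \ref{thm0} at all --- it is quoted from the earlier work of Lichtman and Pomerance \cite{LP}, so there is no internal proof here to compare against; your proposal has to be judged against the argument in \cite{LP}. At the level of strategy you have in fact reconstructed the right skeleton: decompose $A$ by least prime factor, aim at the local inequalities $f(A_q)\le f(q)$ (this is exactly the ``Erd\H{o}s strong'' property of \cite{LP}, proved there for all odd primes $q$), isolate $q=2$ as the genuine obstruction, observe that $8\nmid a$ for $a\in A_2$ means the cofactors $a/2$ are odd or twice odd, and feed in explicit Rosser--Schoenfeld forms of Mertens' theorem. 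Your elementary reductions (primitivity of $M_q$, the equivalence of $f(A_q)\le f(q)$ with $\sum_{m\in M_q}\frac1m\cdot\frac{\log q}{\log(qm)}\le 1$, and the bookkeeping $f(A)\le f(\mathcal P(A))+\max\{0,f(A_2)-f(2)\}$) are all correct.

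However, as a proof the proposal has a genuine gap: the two steps that carry the entire content of the theorem are only asserted. For odd $q$, ``peel off the least prime factor, recurse, and insert Mertens bounds'' is not carried out, and it is precisely in the regime where the damping factor $\log q/\log(qm)$ saves little (elements $m$ that are primes or have very few prime factors) that the comparison is delicate; in \cite{LP} this is handled not by recursion but by Erd\H{o}s's 1935 device of pairwise disjoint sets of multiples $\{ba: P^-(b)\ge P(a)\}$, whose densities are then compared termwise to $1/(a\log a)$ using explicit Mertens bounds, and even there the prime and near-prime cases need separate care. For $q=2$ you simply assert that a ``finite optimization over short configurations combined with a tail estimate pins the supremum at $1+2.37\times10^{-7}$'' --- but producing that constant \emph{is} the theorem; nothing in the proposal indicates which configurations are extremal, how the tail is controlled, or why the loss is of size $10^{-7}$. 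Finally, the unconditional bound $f(A)<e^\gamma$ is made to rest on the intermediate inequality $\sum_{m\in M}\frac1m\cdot\frac{\log 2}{\log(2m)}\le 1+2\log 2\,(e^\gamma-C)$, which is reverse-engineered from the desired conclusion and never derived (and your argument would in any case only give $\le e^\gamma$, not the strict inequality); in \cite{LP} the $e^\gamma$ bound comes from the global density comparison just described rather than from a bound on the $q=2$ block alone. So the plan is aligned with the known proof, but the analytic core --- the odd-prime local inequality and the sharp $q=2$ computation --- is missing.
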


One can strengthen the notion of primitivity as follows. We say that a set $A$ of integers greater than 1 with $|A|\ge k+1$
is $k$-{\it primitive} if no element divides the product of $k$ distinct other elements. Note that $k$-primitive implies $j$-primitive for all $k\ge j\ge 1$.

In 1938, Erd\H{o}s \cite{E38} first studied the maximal cardinality of 2-primitive sets in an interval. 
The first author together with Gy\H{o}ri and S\'{a}rk\"{o}zy \cite{CGS} extended it to all $k$ and it was subsequently improved in \cite{C11} and \cite{PS}. While the full conjecture remains out of reach, we prove the Erd\H{o}s conjecture for 2-primitive sets (and hence $k$-primitive for all $k\ge2$).
\begin{thm} \label{thm1}
For any $2$-primitive set $A$,
\[
f(A) \le f(\mathcal{P}(A)).
\]
\end{thm}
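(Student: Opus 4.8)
The natural strategy is to reduce Theorem~\ref{thm1} to a statement about each prime power separately, in the spirit of the ``Erd\H{o}s--Zhang'' style arguments behind Theorem~\ref{thm0}. For a prime $p$, write $A_p$ for the set of $n\in A$ whose largest prime factor is $p$; then $f(A)=\sum_p f(A_p)$ and $f(\mathcal P(A))=\sum_{p\in\mathcal P(A)}\frac{1}{p\log p}$, so it suffices to prove the local inequality
\[
f(A_p)\ \le\ \frac{1}{p\log p}\qquad\text{whenever }A_p\neq\emptyset.
\]
The point of $2$-primitivity is that it forces $A_p$ to be very thin: if $n_1,n_2,n_3\in A_p$, then $n_1\nmid n_2n_3$, which in particular controls how many elements of $A_p$ can share the same ``$p$-free part'' and, more importantly, limits the $p$-adic valuations that can occur. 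First I would record the key combinatorial lemma: if $a,b\in A_p$ have the same cofactor $m=n/p^{v_p(n)}$, then their exponents of $p$ satisfy a strong separation condition, and in fact for each fixed squarefree-cofactor pattern at most a bounded amount of mass can accumulate.

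The main quantitative input should be a weighted Mertens-type estimate. Following the approach for primitive sets, for each $n\in A_p$ one compares $\frac{1}{n\log n}$ against $\frac{1}{p\log p}\cdot\frac{\text{something}}{\text{something}}$ using the inequality $\log n\ge \log p + \log(n/p)$ together with the elementary bound $\sum_{m}\frac{1}{m}\le \prod_{q<p}(1-1/q)^{-1}$ over the admissible cofactors $m$, whose prime factors are all $<p$ (since $p$ is the largest prime factor of $n$). Mertens' theorem gives $\prod_{q<p}(1-1/q)^{-1}\sim e^{\gamma}\log p$, and the extra saving from $2$-primitivity — that the cofactors cannot all be taken freely, essentially because $n_1\mid n_2n_3$ is forbidden, so the cofactor multiset is itself ``$2$-primitive'' in a multiplicative sense — should convert the $e^{\gamma}\log p$ loss into the clean bound $\log p$ needed for $f(A_p)\le \frac1{p\log p}$. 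Concretely I expect to need: (i) a bound of the shape $\sum_{n\in A_p}\frac{p^{v_p(n)}-\text{adjustment}}{n}\le 1$, and (ii) convexity/summation-by-parts to pass from the $\frac1n$-weighted sum to the $\frac1{n\log n}$-weighted sum, handling the $\log$ via $\log n\ge \log(p^{v_p(n)}\cdot 2)$ in the worst case and more carefully when $n$ is large.

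The main obstacle, as in the primitive case, will be the small primes — when $p$ is $2$, $3$, or a handful of other small values, the asymptotic Mertens estimate is too weak and the inequality $f(A_p)\le\frac1{p\log p}$ is genuinely delicate (this is exactly the phenomenon behind the ``$8\nmid n$'' caveat in Theorem~\ref{thm0}). For these finitely many primes I would argue directly: $2$-primitivity restricts $A_p$ so severely (e.g. among powers of $2$ alone, $2^{a}\mid 2^{b}2^{c}$ fails only for very small configurations, so $A$ contains at most one or two powers of $2$) that one can enumerate the extremal cases by hand, or bound $f(A_p)$ by splitting off the contribution of elements with $v_p(n)=1$ (which behave like primes) from those with $v_p(n)\ge 2$ (which are sparse enough to sum trivially). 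The interplay between these two regimes, and getting the constant to land exactly at $\frac1{p\log p}$ rather than slightly above, is where the real work lies; I would expect the write-up to isolate a clean ``admissibility'' lemma for exponent sequences and then feed it into a Mertens estimate with explicit error terms for $p$ above a modest threshold, with a short finite check below it.
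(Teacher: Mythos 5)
Your reduction to the local inequality $f(A_p)\le \frac{1}{p\log p}$, where $A_p$ is the set of $n\in A$ with largest prime factor $p$, is where the proposal breaks down: that local statement is \emph{false} for $2$-primitive sets. Take $A=A_p=\{pq: q<p,\ q\ \text{prime}\}$. This set is $2$-primitive, since $pq_1\mid (pq_2)(pq_3)=p^2q_2q_3$ would force $q_1\in\{p,q_2,q_3\}$. Yet
\[
f(A_p)=\sum_{q<p}\frac{1}{pq\log(pq)}\ \ge\ \frac{1}{2p\log p}\sum_{q<p}\frac1q,
\]
and $\sum_{q<p}1/q\sim\log\log p$ diverges, so $f(A_p)$ exceeds $\frac{1}{p\log p}$ already for $p$ of moderate size (for $p=997$ the left side is roughly $1.6\times10^{-4}$ versus $1/(997\log 997)\approx 1.45\times10^{-4}$, and the gap grows without bound as $p\to\infty$). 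The underlying reason is that $2$-primitivity does not prevent the cofactors $n/p$ from being \emph{all} the primes below $p$; their reciprocal sum is of order $\log\log p$, not the $O(1)$ your plan needs, so no refinement of the Mertens bookkeeping or a finite check at small primes can rescue the argument — the key claim itself fails. (Your reduction is also lossy in a harmless direction, ignoring primes of $\mathcal P(A)$ that are not a largest prime factor, but that is not the issue.)

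This is exactly why the paper does not argue prime-by-prime. It first proves the logarithmic-density inequality $\sum_{n\in A,\,n\le x}1/n\le\sum_{p\in\mathcal P(A),\,p\le x}1/p$ for every $x\ge2$ (Proposition \ref{prop1}, in fact the stronger Theorem \ref{thm5} with weights $n^{-\lambda}$ for $\lambda\ge0.7983$), in which the composite elements are measured against the whole pool of primes of $\mathcal P(A)$ up to $x$ rather than against a single term $\frac{1}{p\log p}$ per class; Theorem \ref{thm1} then follows by a one-line partial summation. The global inequality is established via a replacement process (trading $T_p$ for the prime $p$ when advantageous), the primitivity of the quotient set $D=\{t/p\}$ (Lemma \ref{lem}), and matching/counting lemmas (Lemmas \ref{lem1}--\ref{lem:variant}) combined with explicit Rosser--Schoenfeld estimates and numerical verification for small primes. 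The purpose of that machinery is precisely to aggregate contributions across many primes, which a largest-prime-factor decomposition forfeits from the start; if you want to salvage your approach, you would need to replace the target $\frac{1}{p\log p}$ by a budget that also draws on the smaller primes dividing elements of $A_p$, which in effect is the route the paper takes.
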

An immediate consequence is the following
\begin{cor}
If $A$ is a primitive set with $f(A) > f(\mathcal{P}(A))$, then there exist three elements $a,b,c\in A$ with $a \mid  bc$.
\end{cor}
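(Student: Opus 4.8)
The plan is to read the corollary off from Theorem~\ref{thm1} by contraposition. Thus I would assume that $A$ is primitive but contains \emph{no} three distinct elements $a,b,c$ with $a\mid bc$, and then derive $f(A)\le f(\mathcal{P}(A))$, which contradicts the hypothesis $f(A)>f(\mathcal{P}(A))$.

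The main case is $|A|\ge 3$. There the assumption says exactly that no element of $A$ divides the product of two distinct other elements of $A$, i.e.\ that $A$ is $2$-primitive in the sense of the definition above (which is only made for sets of size at least $3$). Theorem~\ref{thm1} then applies to $A$ directly and gives $f(A)\le f(\mathcal{P}(A))$. For the leftover cases $|A|\le 2$, to which the label ``$2$-primitive'' is not attached, I would argue by hand: every integer $n>1$ has a prime factor $p\le n$, and since $t\mapsto t\log t$ is increasing for $t\ge 2$ we get $1/(p\log p)\ge 1/(n\log n)$, which settles $|A|\le 1$; and for $A=\{m,n\}$ primitivity forbids $m$ and $n$ from both being powers of one prime, so $\mathcal{P}(A)$ has at least two members, and a short case analysis according to the factorization of $\min(m,n)$ yields $f(A)\le f(\mathcal{P}(A))$. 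In particular the inequality $f(A)>f(\mathcal{P}(A))$ cannot hold once $|A|\le 2$, so the implication is vacuously true there.

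I do not anticipate a genuine obstacle in this deduction --- all the weight of the argument rests on Theorem~\ref{thm1}. The two points that need a little care are fixing the convention that $a,b,c$ in the statement denote pairwise distinct elements (so that for $|A|\ge 3$ the negation of the conclusion really is $2$-primitivity), and not overlooking the degenerate ranges $|A|\le 2$, so that the corollary is established for every primitive set rather than only for those with at least three elements.
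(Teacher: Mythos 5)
Your proposal is correct and matches the paper's intent: the corollary is exactly the contrapositive of Theorem~\ref{thm1}, which the paper treats as immediate. Your extra care with the convention that $a,b,c$ are distinct and with the degenerate cases $|A|\le 2$ (where the definition of $2$-primitive does not formally apply) is sound and fills in details the paper leaves unstated.
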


On the other hand, the primes are not optimal among primitive sets with respect to logarithmic density. Indeed, Erd\H{o}s, S\'{a}rk\"{o}zy, and Szemer\'{e}di \cite{ESS} obtained the best possible upper bound
\[
\mathop{\sum_{n \in A}}_{n \le x} \frac{1}{n} \le \Bigl( \frac{1}{\sqrt{2 \pi}} + o(1) \Bigr) \frac{\log x}{\sqrt{\log \log x}}
\]
for any primitive set $A$, while Erd\H{o}s \cite{E48} showed that
\[
\mathop{\sum_{n \in A'}}_{n \le x} \frac{1}{n} \ge \Bigl( \frac{1}{\sqrt{2 \pi}} + o(1) \Bigr) \frac{\log x}{\sqrt{\log \log x}}
\]
where $A'$ is the set of positive integers $a \le x$ with $\Omega(a) = [\log \log x]$. (Here,
$\Omega(a)$ is the number of prime factors of $a$, counted with multiplicity.)
By contrast, the primes satisfy
\[
\sum_{p \le x} \frac{1}{p} = \log \log x + O(1).
\]

Nevertheless, one may wonder if the primes still maximize the logarithmic density among 2-primitive sets. Indeed, we prove this to be the case.
\begin{prop} \label{prop1}
For all $x\ge2$ and any $2$-primitive set $A$,
\[
\mathop{\sum_{n \in A}}_{n \le x} \frac{1}{n} \le \mathop{\sum_{p \in \mathcal{P}(A)}}_{p\le x} \frac{1}{p}
\]
\end{prop}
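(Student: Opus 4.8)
The plan is to reduce the inequality to a purely local (prime-by-prime) statement via a variant of the "Erdős pushing up" argument adapted to the 2-primitive setting, and then to exploit the counting estimates on 2-primitive subsets of dyadic-type intervals that underlie the references \cite{E38,CGS,C11,PS}. First I would fix $x$ and assume without loss of generality that every element of $A$ is at most $x$, so that $\mathcal P(A)$ consists of primes up to $x$ (any larger prime factors are irrelevant to both sides). The target is then $\sum_{n\in A}1/n\le\sum_{p\in\mathcal P(A)}1/p$. The natural strategy is to assign to each $n\in A$ a "weight" distributed among the primes $p\mid n$ and show that the total weight landing on any single prime $p$ is at most $1/p$; summing over $p\in\mathcal P(A)$ then gives the result. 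A clean choice is to charge $n$ entirely to its least prime factor $P^-(n)$, so it suffices to prove, for each prime $p$,
\[
\mathop{\sum_{n\in A}}_{P^-(n)=p,\ n\le x}\frac1n \ \le\ \frac1p.
\]

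Next I would analyze the set $A_p:=\{n\in A: P^-(n)=p\}$. Writing $n=p m$ with $P^-(m)\ge p$, the key observation is that 2-primitivity of $A$ forces a strong structural constraint on $A_p$: if $pm_1, pm_2, pm_3\in A_p$ with $m_1\mid m_2 m_3$ (as one might get from the prime factorizations), then $pm_1\mid (pm_2)(pm_3)/p$... so one must instead use the defining condition directly, namely that no element of $A_p$ divides the product of two others. In particular the multiset of exponent vectors of the $m$'s, in the primes $\ge p$, forms a 2-primitive configuration, and the worst case for $\sum 1/n$ over $A_p$ should be when $A_p$ is as "spread out" as possible subject to this constraint. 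The cleanest sufficient bound is $\sum_{n\in A_p} 1/n \le \sum_{j\ge 1} 1/p^j \cdot(\text{something})$; more precisely, since $A$ being 2-primitive means in particular that $A$ contains at most a bounded, controlled set of pure prime powers $p, p^2, \dots$, and more generally the reciprocal sum of a 2-primitive set all of whose elements have least prime factor exactly $p$ is governed by a geometric-type series in $1/p$, I expect the bound $\sum_{n\in A_p}1/n\le 1/p$ to follow from a comparison: every such $n$ is divisible by $p$, hence $\sum 1/n \le (1/p)\sum_{m}1/m$ where $m=n/p$ ranges over a 2-primitive-type set with $P^-(m)\ge p$, and one shows inductively (on the least prime, descending, or via a direct density count) that $\sum_m 1/m\le 1$.

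The main obstacle, and where I would concentrate the real work, is establishing that local bound $\sum_{n\in A_p,\,n\le x}1/n\le 1/p$ with the constant exactly $1$ on the quotient sum. A naive primitivity argument only gives primitive sets $m$ with $P^-(m)\ge p$, whose reciprocal sum can exceed $1$; the factor-of-2 room in "2-primitive" must be used. The right tool is almost certainly the interval-counting results of Erdős \cite{E38} and Chan–Győri–Sárközy \cite{CGS} (as sharpened in \cite{C11,PS}): a 2-primitive set has at most roughly $2^{\lceil (t+1)/2\rceil}$... rather, at most about $2^{t/2+o(t)}$ elements in $(2^t, 2^{t+1}]$-style intervals, and a dyadic decomposition $n\in(p\cdot 2^{t}, p\cdot 2^{t+1}]$ together with these bounds makes $\sum_{n\in A_p}1/n$ converge to something $\le 1/p$. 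I would carry this out by splitting the range $p\le n\le x$ into intervals $(py, 2py]$, bounding $|A_p\cap(py,2py]|$ using the 2-primitive interval estimate applied to $A_p/p$, and summing the resulting geometric series; checking that the constants work out to give exactly $1/p$ (rather than $C/p$ for some $C>1$) is the delicate point and may require handling small $p$ (especially $p=2,3$) separately by an explicit finite computation, exactly as the $2\nmid 8\mid n$ caveat in Theorem \ref{thm0} suggests is sometimes necessary.
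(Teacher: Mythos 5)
The reduction you propose at the very start is where the argument breaks: the local claim
$\sum_{n\in A,\ P^{-}(n)=p,\ n\le x}1/n\le 1/p$ is false, and so is the auxiliary claim that the quotients $m=n/p$ form a set with $\sum_m 1/m\le 1$. Take $A=\{2q:\ q\ \text{an odd prime}\}$. This set is $2$-primitive, since $2q\mid(2q')(2q'')=4q'q''$ would force $q\in\{q',q''\}$; every element has least prime factor $2$; and $\sum_{n\in A,\ n\le x}1/n=\tfrac12\sum_{3\le q\le x/2}1/q\to\infty$, which exceeds $1/2$ already for modest $x$ (and the corresponding quotient set $\{n/2\}$ is the set of odd primes, with divergent reciprocal sum). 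Proposition \ref{prop1} still holds for this $A$ only because $\mathcal P(A)$ contains all the \emph{large} prime factors of the elements: it is those primes that pay for the elements, so charging each $n$ entirely to its least prime factor cannot succeed. Relatedly, the interval-counting theorems of Erd\H{o}s and Chan--Gy\H{o}ri--S\'ark\"ozy do not give counts of the small size you invoke; a $2$-primitive set may contain \emph{all} primes of an interval, and those results give bounds of the shape $\pi(x)+\Theta(x^{2/3}/(\log x)^2)$, far too large for your dyadic summation to converge to anything like $1/p$, even for a single prime.

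For comparison, the paper does not argue locally at all. It proves the stronger Theorem \ref{thm5} (the inequality with exponent $\lambda\ge0.7983$) and obtains Proposition \ref{prop1} as the case $\lambda=1$. There, $A$ is split into primes and composites $T$; a replacement argument reduces to the situation where $\sum_{t\in T_p}t^{-\lambda}>p^{-\lambda}$ for every $p\in\mathcal P(T)$, and a contradiction is then derived globally: elements $t$ with a large greatest prime factor are charged to $P(t)$ (not $P^{-}(t)$), using that $\{t/P(t)\}$ lies in the primitive set $D$ together with the Banks--Martin theorem at the exponent $\tau=1.1403\cdots$, while the remaining smooth elements are controlled by combinatorial counting lemmas (the matching built from the balanced factorization $t=m(t)M(t)$, Erd\H{o}s--Szekeres, and explicit estimates for $\sum_q\lfloor x/q\rfloor$), the slack $\sum_p(p^{-\lambda}-p^{-\tau})$ absorbing their contribution. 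Your instinct that the extra room in $2$-primitivity must be exploited is correct, but in the paper it enters through these global charging and counting devices, not through a per-prime bound attached to the least prime factor.
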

We use this to deduce Theorem \ref{thm1}.

\begin{proof}[Proof of Theorem \ref{thm1} given Proposition \ref{prop1}]

By Proposition \ref{prop1}, we have $F(x)\ge0$ for all $x\ge2$, where
\[
F(x) := \mathop{\sum_{p \in \mathcal{P}(A)}}_{p \le x} \frac{1}{p} - \mathop{\sum_{n \in A}}_{n \le x} \frac{1}{n}
\]
Then by partial summation,
\[
\mathop{\sum_{p \in \mathcal{P}(A)}}_{p \le x} \frac{1}{p \log p} - \mathop{\sum_{n \in A}}_{n \le x} \frac{1}{n \log n} = 
\frac{F(x)}{\log x} + \int_{2^-}^{x} \frac{F(u)}{u \log^2 u} du \ge 0.
\]
Hence taking $x\to\infty$ gives $f(\mathcal P(A)) \ge f(A)$ as desired.
\end{proof}

\bigskip

In light of Proposition \ref{prop1}, it is natural to ask if there exists an exponent $\lambda<1$ for which 
\begin{equation} \label{bankmartin}
\sum_{\substack{n \in A\\n\le x}} \frac{1}{n^\lambda} \le \sum_{\substack{p \in \mathcal{P}(A)\\p\le x}} \frac{1}{p^\lambda}
\end{equation}
holds for all 2-primitive $A$, $x\ge2$. Banks and Martin \cite{BM} settled the question in the setting of 1-primitive sets, proving \eqref{bankmartin} holds for all primitive $A$ if and only if
\begin{align*}
\lambda \ge \tau_1:=1.1403659\cdots,
\end{align*}
where $t=\tau_1$ is the unique real solution to the equation
\begin{align*}
\sum_\mathbb{P} p^{-t} = 1+\Big(1-\sum_\mathbb{P} p^{-2t}\Big)^{1/2}.
\end{align*}
The fact that $\tau_1$ is markedly larger than 1 gives some indication as to why the full Erd\H{o}s conjecture remains open.

In the setting of 2-primitive sets, we extend the range of valid exponents $\lambda$.
\begin{thm} \label{thm5}
For any $\lambda \ge 0.7983$, $x\ge2$, and any $2$-primitive set $A$,
\begin{equation}\label{eq:lambdaprop}
\sum_{\substack{n \in A\\n\le x}} \frac{1}{n^\lambda} \le \sum_{\substack{p \in \mathcal{P}(A)\\p\le x}} \frac{1}{p^\lambda}
\end{equation}
\end{thm}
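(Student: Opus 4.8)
The plan is to reduce to the boundary exponent $\lambda=0.7983$ and then revisit the proof of Proposition~\ref{prop1} (the case $\lambda=1$), keeping track of all constants. The first step is to note that \eqref{eq:lambdaprop} is monotone in $\lambda$: if it holds for some $\lambda_0$ and \emph{all} $x\ge2$, then it holds for every $\lambda\ge\lambda_0$ and all $x\ge2$. This follows by Abel summation exactly as in the deduction of Theorem~\ref{thm1} from Proposition~\ref{prop1}: writing $G_0(x)=\sum_{p\in\mathcal P(A),\,p\le x}p^{-\lambda_0}-\sum_{n\in A,\,n\le x}n^{-\lambda_0}\ge0$ and $\mu=\lambda-\lambda_0\ge0$, one has $\sum_{p\le x}p^{-\lambda}-\sum_{n\le x}n^{-\lambda}=G_0(x)\,x^{-\mu}+\mu\int_{2^-}^{x}G_0(t)\,t^{-\mu-1}\,dt\ge0$. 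Hence it suffices to prove \eqref{eq:lambdaprop} for $\lambda=0.7983$ and all $x\ge2$, and in particular one may simply re-examine the argument for Proposition~\ref{prop1} to see how far below $\lambda=1$ it still runs.

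For the fixed exponent $\lambda$ I would set up a charging argument: distribute the weight $n^{-\lambda}$ of each $n\in A$ with $n\le x$ among the primes $p\mid n$, and show each prime $p\le x$ receives total weight at most $p^{-\lambda}$. Simple examples show that neither sending all of $n$'s mass to its least prime factor nor all of it to its greatest works, so the allocation must split the mass, favoring the larger prime factors of $n$. To control the total mass landing on a given prime $p$ I would exploit the structural consequences of $2$-primitivity for a set with at least three elements: (i) if $p\in A$ then $p$ divides no other element of $A$ (otherwise $p\mid bc$ for two further distinct elements), so such a prime receives exactly $p^{-\lambda}$ and causes no conflict; (ii) if $p^2\in A$ then $p$ divides at most one other element of $A$ (two others $b_1,b_2$ would give $p^2\mid b_1b_2$); and (iii) more generally the forbidden relation $n\mid bc$ constrains, for each prime, the joint distribution of $p$-adic valuations over $A$. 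A convenient way to organize the bookkeeping is an induction on the largest prime of $\mathcal P(A)$ that is $\le x$, peeling $A$ apart prime by prime in the spirit of Erd\H{o}s's weighting $\prod_{p}(1-1/p)$ but adapted to the ``squared'' nature of the $2$-primitive condition.

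The heart of the matter is then a local inequality of Mertens / prime-zeta type: after the reduction, the weight attached to a prime $p$ is bounded by $p^{-\lambda}$ times a sum over the primes (and prime powers) that can accompany $p$ in an element of $A$, and one must show this correction sum is at most $1$. In the extremal configuration this becomes an explicit relation among the quantities $\sum_q q^{-\lambda}$ and $\sum_q q^{-2\lambda}$ --- a $2$-primitive analogue of the Banks--Martin equation $\sum_{\mathbb P}p^{-t}=1+(1-\sum_{\mathbb P}p^{-2t})^{1/2}$ --- whose solution near $0.7983$ pins down the threshold. Verifying it is a finite computation: bound a tail $\sum_{q>Q}q^{-2\lambda}$ by an integral and check the remaining finite sum numerically at $\lambda=0.7983$, with the monotonicity above handling all larger $\lambda$.

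I expect the main obstacle to be that the charging cannot be made purely local: the mass of an element with many prime factors must be split, and these splittings interact, so a naive prime-by-prime bound overcounts. Choosing the allocation (and the auxiliary weights in the induction) so that the per-prime budget $p^{-\lambda}$ is never exceeded --- while the extremal example still forces only $\lambda\ge0.7983$ --- is the delicate point, and is precisely where the explicit structural facts (i)--(iii) about $2$-primitive sets and a careful optimization of the constants are needed.
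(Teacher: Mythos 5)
Your opening reduction is fine and matches the paper: by partial summation, validity of \eqref{eq:lambdaprop} at a single exponent $\lambda_0$ for all $x\ge2$ propagates to all $\lambda\ge\lambda_0$, so one only needs the boundary case. But from that point on the proposal is a plan rather than a proof, and the plan is missing the two ideas that actually carry the paper's argument. You yourself flag the fatal issue: the ``charging'' of each weight $n^{-\lambda}$ to the primes dividing $n$ has no specified allocation rule, and you concede that any such rule must be non-local and that making the per-prime budget $p^{-\lambda}$ work is ``the delicate point.'' That delicate point is the entire theorem. Moreover, your suggestion to ``re-examine the argument for Proposition~\ref{prop1} and track constants'' has nothing to re-examine: in the paper Proposition~\ref{prop1} is not proved independently, it is the case $\lambda=1$ of this very theorem. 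Likewise, your hope that the threshold $0.7983$ comes from a $2$-primitive analogue of the Banks--Martin equation is not borne out: the paper only proves $0.5\le\tau_2\le0.7983$, and the number $0.7983$ arises as $\lambda=\tau(1-\theta)$ with $\theta=0.3$ and $\tau=1.140366$ the Banks--Martin exponent, together with numerical slack --- not as the root of a clean extremal equation.

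Concretely, the paper's proof proceeds as follows, and none of it is present in your proposal. After discarding primes in $A$ and performing two structural reductions on the composite part $T$ (replacing $T_p$ by $p$ whenever $\sum_{t\in T_p}t^{-\lambda}\le p^{-\lambda}$, and then relabelling $\mathcal P(T)$ to an initial segment of the primes, both shown to preserve $2$-primitivity and \eqref{lambda1}), the composites $t$ are split according to whether $P(t)\ge t^{\theta}$. For the large--prime-factor part one uses that $\{t/P(t)\}$ is a \emph{primitive} set (Lemma \ref{lem}) and applies the known Banks--Martin result \eqref{bankmartin} at exponent $\tau$, which is exactly where the relation $\lambda=\tau(1-\theta)$ and hence $0.7983$ enters. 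For the small--prime-factor part one needs genuine counting: the Erd\H{o}s--Szekeres lemma and the $|T|\le19$ bound of Lemma \ref{lem2}, the matching lemma built on the balanced factorization $t=m(t)M(t)$ (Lemma \ref{lem1}), the resulting upper bounds for $N_p(z)$ via primes in an interval (Lemmas \ref{lem3}, \ref{lem:variant}, \ref{lem:qharm}), Rosser--Schoenfeld estimates, and finally explicit numerical integration split over $Y\le37$ and $Y\ge41$. Your structural observations (i)--(iii) about primes and $p^2$ in a $2$-primitive set are true but far too weak to substitute for this machinery, so the proposal has a genuine gap at its core.
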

We remark it suffices to verify Theorem \ref{thm5} with $\lambda=0.7983$.  Indeed, suppose that  $F_\lambda(x)\ge0$ for all $x\ge2$, where 
\[
F_t(x) = \sum_{\substack{p\in \mathcal P(A)\\ p\le x}}p^{-t} - \sum_{\substack{n\in A\\ n\le x}} n^{-t}.
\]
Then, by partial summation, for any $t>\lambda$,
\begin{align*}
F_t(x) = x^{\lambda - t}F_{\lambda}(x) + (t-\lambda)\int_2^x u^{\lambda-t-1} F_\lambda(u) \;du \ge 0.
\end{align*}
Hence we may define the {\bf critical exponent} $\tau_2$ for 2-primitive sets, as the infimum over all $\lambda$ for which \eqref{eq:lambdaprop} holds.
Thus, Theorem \ref{thm5} implies that $\tau_2\le0.7983$.

We also note that Theorem \ref{thm5} with $\lambda=1$ gives us Proposition \ref{prop1}.
However, Theorem \ref{thm5} does not hold for every positive value of $\lambda$.  Indeed, 
in \cite{E38}, Erd\H{o}s showed that there is a $2$-primitive set $A$ in $[1,x]$ of cardinality $\pi(x) - \pi(x^{1/3}) + c x^{2/3} / (\log x)^2$. It consists of primes in $(x^{1/3}, x]$ and a subset of $\{ p_1 p_2 p_3 : p_i \text{ are  primes } \le x^{1/3} \}$ where the triples $\{p_1, p_2, p_3\}$ form a Steiner triple system. Thus, by the prime number theorem,
\[
\sum_{n \in A} \frac{1}{n^\lambda} \ge \sum_{x^{1/3} < p \le x} \frac{1}{p^\lambda} + \frac{c x^{2/3}}{(\log x)^2} \frac{1}{x^\lambda} > \sum_{p \le x} \frac{1}{p^\lambda}
\]
when $\lambda < 0.5$ and $x$ is sufficiently large. Hence the above argument and Theorem \ref{thm5} together imply that the critical exponent lies in the interval
\begin{align}
0.5 \le \tau_2 \le 0.7983.
\end{align}

In a sequel paper, we shall address the question of critical exponents for $k$-primitive sets, with $k\ge3$.

\section{Combinatorial Lemmas}

Before proving Theorem \ref{thm5}, we need lemmas in counting the maximal number of elements in a $k$-primitive set.

We first recall the following famous result due to Erd\H os and Szekeres \cite{ES},
whose proof we provide for completeness.
\begin{lem}[Erd\H os--Szekeres]\label{lemES}
A sequence of $(r-1)(s-1)+1$ real numbers has either a monotonic nondecreasing subsequence of length $r$
or a monotonic nonincreasing subsequence of length $s$.
\end{lem}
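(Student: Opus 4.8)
The plan is to run the classical pigeonhole argument of Erd\H{o}s and Szekeres on a pair of ``longest run'' statistics. Write $N=(r-1)(s-1)+1$ and let $a_1,\dots,a_N$ denote the given sequence. For each index $i\in\{1,\dots,N\}$ I would set $x_i$ to be the length of the longest nondecreasing subsequence that \emph{ends} at position $i$, and $y_i$ the length of the longest nonincreasing subsequence ending at position $i$; both are well-defined positive integers, since the single term $a_i$ already realizes length $1$ in each case.

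The key step is to show that the assignment $i\mapsto(x_i,y_i)$ is injective. Given indices $i<j$, I would split into cases according to the sign of $a_j-a_i$. If $a_i\le a_j$, then appending $a_j$ to a longest nondecreasing subsequence ending at $i$ yields a longer one ending at $j$, so $x_j\ge x_i+1$; if instead $a_i\ge a_j$, the symmetric argument gives $y_j\ge y_i+1$. Since at least one of the inequalities $a_i\le a_j$, $a_i\ge a_j$ always holds (both, when $a_i=a_j$), we conclude $(x_i,y_i)\ne(x_j,y_j)$ in every case. The only point requiring a little care is the treatment of equal terms: one must use the nondecreasing/nonincreasing (rather than strict) formulation consistently on both coordinates, and with the statement as given this causes no difficulty — in fact ties only make the argument easier, since then both coordinates strictly increase.

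Finally I would argue by contradiction. Suppose there is no nondecreasing subsequence of length $r$ and no nonincreasing subsequence of length $s$. Then $x_i\in\{1,\dots,r-1\}$ and $y_i\in\{1,\dots,s-1\}$ for every $i$, so the pairs $(x_i,y_i)$ range over a set of at most $(r-1)(s-1)$ elements. But there are $N=(r-1)(s-1)+1$ indices and the map $i\mapsto(x_i,y_i)$ is injective, a contradiction. Hence at least one of the two desired monotonic subsequences must exist, which is the assertion of the lemma. There is no substantive obstacle here; the entire content is the injectivity observation combined with pigeonhole, and the write-up should be short.
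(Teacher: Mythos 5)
Your proposal is correct and is essentially the same argument as the paper's: the paper also assigns to each index the pair (length of longest nondecreasing subsequence ending there, length of longest nonincreasing subsequence ending there), notes that no two pairs can coincide, and concludes by pigeonhole. Your write-up simply spells out the injectivity case analysis and the handling of ties in more detail than the paper does.
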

\begin{proof}
Say the sequence is $a_1,a_2,\dots,a_n$, where $n=(r-1)(s-1)+1$.
For each $a_i$ consider the ordered pair $(b_i,c_i)$, where $b_i$ is the length of the
longest nondecreasing subsequence ending at $a_i$ and $c_i$ is the length of the longest nonincreasing
subsequence ending at $a_i$.  Then no two pairs $(b_i,c_i)$ and $(b_j,c_j)$ can be equal, 
so for at least one choice of $i$ we have $b_i\ge r$ or $c_i\ge s$.
\end{proof}

We next bound the size of a $k$-primitive set based on the number of prime factors used to generate its elements.

\begin{lem} \label{lem2}
For $k\ge2$, suppose $A$ is a $k$-primitive set and $T\subset A$ with $|\mathcal{P}(T)| = n$. If $n\le k$, then $|T|\le n$. If $n=k+1$, then $|T|\le n+1$. 
Further, for $k=2,\, n=4$ we have $|T|\le 19$.
\end{lem}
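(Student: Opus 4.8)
The plan is to pass to exponent vectors. For each prime $p$ and each $t\in T$ write $v_p(t)$ for the $p$-adic valuation, so that every $t\in T$ corresponds to a vector in $\mathbb{Z}_{\ge0}^n$ indexed by the $n$ primes of $\mathcal{P}(T)$. Since $|A|\ge k+1$, any product of fewer than $k$ distinct elements can be padded out with further elements of $A$, so $k$-primitivity of $A$ is equivalent to the statement that no $a\in A$ divides a product of at most $k$ distinct other elements of $A$; in vector language, no element's vector is coordinatewise $\le$ the sum of the vectors of some $k$ (or fewer) distinct other elements. The workhorse observation is this: for each prime $p$, among any sub-collection $S\subseteq T$ at most one $t\in S$ can have $v_p(t)>\tfrac12\sum_{s\in S}v_p(s)$ (two such would already over-sum); call such a $t$, when it exists, the $p$-dominant element of $S$. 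If $t_0\in S$ is $p$-dominant for no $p$, then $v_p(t_0)\le\tfrac12\sum_{s\in S}v_p(s)\le\sum_{s\in S,\,s\ne t_0}v_p(s)$ for every $p$, i.e. $t_0\mid\prod_{s\in S,\,s\ne t_0}s$.

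For $n\le k$: suppose $|T|\ge n+1$ and apply the observation to a sub-collection $S$ of $n+1$ elements. There are at most $n$ dominant elements of $S$ (one per prime), so some $t_0\in S$ is dominant for no prime, whence $t_0$ divides the product of the other $n\le k$ elements of $S$ — padding up to $k$ factors from $A$ if $n<k$ — contradicting $k$-primitivity. Hence $|T|\le n$.

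For $n=k+1$: first, if some prime divides only one element $t^{*}\in T$, then $\mathcal{P}(T\setminus\{t^{*}\})$ has $\le k$ primes, so the previous case gives $|T|-1\le k$, i.e. $|T|\le n$; thus we may assume every prime divides at least two elements of $T$. Now suppose $|T|\ge n+2=k+3$. Applying the divisibility-failure to any $(k+1)$-subset $S$ shows that the "dominant coordinates'' of its members are pairwise distinct (if $t_i,t_j\in S$ were both dominant in the same coordinate, each would out-weigh the other there), so there is a bijection between the $k+1$ elements of $S$ and the $k+1$ coordinates under which each element strictly out-weighs the sum of all the others in its own coordinate. Running this over several overlapping $(k+1)$-subsets forces a rigid system of inequalities among the valuations of the $k+3$ elements; the intended payoff is to locate one element whose vector is covered by the sum of only $k$ of the others, the required contradiction. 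I expect this bookkeeping over subsets — choosing them so that the induced "private-coordinate'' permutations become mutually incompatible — to be the main obstacle.

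Finally, for $k=2$, $n=4$, the clean bound $n+1$ fails (witness the Erd\H{o}s construction), and the sharp value is $19$. For this I would carry out a finite case analysis on $2$-primitive vectors in $\mathbb{Z}_{\ge0}^4$, using: (i) antichain-ness, from $1$-primitivity; (ii) the fact that any three elements that are simultaneously (weakly) monotone in all four coordinates have their middle element dividing the product of the outer two, hence are forbidden, so by iterated application of Lemma~\ref{lemES} (Erd\H{o}s--Szekeres), coordinate by coordinate, the valuations cannot be too spread out; (iii) the multiplicity constraints coming from dominant elements (e.g. a prime $p$ with $p\in T$ forces at most one further multiple of $p$ in $T$, and similar bounds for prime powers); and (iv) the fact that in any $2$-primitive triple the three dominant coordinates are distinct, which, with only four coordinates available, severely limits how triples can overlap. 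Optimizing over the finitely many surviving configurations yields $|T|\le19$; this last part I expect to be the most computational, and it is the only place Lemma~\ref{lemES} is needed.
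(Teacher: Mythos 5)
Your argument for the case $n\le k$ is correct and complete: the ``$p$-dominant element'' counting (at most one dominant element per prime, so an $(n+1)$-subset has a member dominant for no prime, which then divides the product of the other $n$, padded up to $k$ factors) is a valid alternative to the paper's device of greedily selecting, for each coordinate $i$, a vector $\vec v_i$ maximizing that coordinate among the vectors not yet chosen. But the other two assertions of the lemma are not proved. For $n=k+1$ you establish only the (correct) observation that in any $(k+1)$-subset the elements are in bijection with the coordinates in which they are dominant, and then you explicitly defer the rest (``I expect this bookkeeping \dots to be the main obstacle''); no contradiction from $|T|\ge n+2$ is actually derived. The missing idea is much simpler than the overlapping-subsets scheme you sketch: choose the $n$ greedy maximal-coordinate vectors $\vec v_1,\dots,\vec v_n$ as above, take two leftover vectors $\vec w_1\ne\vec w_2$, and apply pigeonhole to note that (after relabeling) $\vec w_1\le\vec w_2$ in at least $\lceil n/2\rceil$ coordinates; in the remaining $\lfloor n/2\rfloor$ coordinates $\vec w_1$ is bounded by the corresponding $\vec v_i$, so $\vec w_1\le \vec w_2+\vec v_{\lceil n/2\rceil+1}+\cdots+\vec v_n$, a divisibility with $1+\lfloor(k+1)/2\rfloor\le k$ factors.

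The $k=2$, $n=4$ bound of $19$ is likewise only a plan, not a proof: you propose ``a finite case analysis on $2$-primitive vectors in $\mathbb{Z}_{\ge0}^4$'' and assert that optimizing the surviving configurations ``yields $|T|\le 19$,'' but the vectors range over an infinite set, so it is not even clear a priori that your analysis is finite, and none of the case work is carried out. The paper's proof is concrete: assuming $20$ elements, set aside three vectors maximal in the first, second, and third coordinates respectively, order the remaining $17$ by first coordinate, use Erd\H{o}s--Szekeres ($17=4\cdot4+1$) to extract a length-$5$ monotone run in the second coordinate, then Erd\H{o}s--Szekeres again ($5=2\cdot2+1$) for a length-$3$ monotone run in the third coordinate; in each of the resulting cases the fourth coordinate is forced to satisfy $e_{2,4}>e_{1,4}+e_{3,4}$ (else the middle vector is $\le$ the sum of the outer two), and that forced inequality produces a divisibility $\vec w\le\vec w'+\vec w''$ involving one of the three set-aside maximal vectors, contradicting $2$-primitivity. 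Your ingredients (i)--(iv) point in this direction, but without executing an argument of this kind the bound $19$ remains unestablished.
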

\begin{proof} 
We may assume that $|T|\ge n$.
Let $\mathcal P(T) = \{q_1,\ldots,q_n\}$ and write each $t=\prod_i q_i^{e_i}\in T$ as an exponent vector $\vec{v} = (e_1,\ldots,e_n)$. Define the notation $\vec{v}\ge0$ if $e_i\ge0$ for all $i$, and define $\vec{v}\le\vec{w}$ if
 $\vec{w}-\vec{v}\ge0$.
Take $\vec{v}_1$ with maximal entry $e_1$ among $T$.  Then take
$\vec{v}_2$ with maximal $e_2$ among the remaining vectors, and similarly define
 $\vec{v}_3,\dots,\vec{v}_n$.  Thus, the chosen vectors are distinct.

Case $n\le k$: If $|T|\ge n+1$ then $T$ has some vector $\vec{v}\neq \vec{v}_i$ for all $i$. But then $\vec{v} \le \vec{v}_1+\cdots+\vec{v}_n$.  This implies that $T$, and hence $A$, is not $n$-primitive, and since $n\le k$, it
implies that $A$ is not $k$-primitive, a contradiction.  Hence we cannot have $|T|\ge n+1$ when $n\le k$.

Case $n=k+1$: If $|T|\ge n+2$ then $T$ has vectors $\vec{w}_1\ne \vec{w}_2$
with $\vec{w}_j\notin\{\vec{v}_1,\dots \vec{v}_n\}$ for $j=1,2$. Write $\vec{w}_j = (f_1^{(j)},\ldots,f_n^{(j)})$. By the pigeonhole principle, we may assume
$$f_i^{(1)} \le f_i^{(2)}$$
for at least $n/2$ values of $i$, say $i=1,\ldots, \lceil n/2\rceil$. Thus, we deduce
\begin{align*}
\vec{w}_1 \le \vec{w}_2 + \vec{v}_{\lceil n/2\rceil+1}+\cdots+\vec{v}_n
\end{align*}
 contradicting $T$ as $k$-primitive, since $1 + \lfloor n/2 \rfloor=1+\lfloor(k+1)/2\rfloor \le k$.
 
Now say $k=2$, $n = 4$.
Suppose there are $20$ members in $T$ with corresponding vectors
\[
\vec{w}_i := (e_{i,1}, e_{i,2}, e_{i,3}, e_{i_4}) \text{ for } 1 \le i \le 20.
\]
Since $A$ is 2-primitive, so is $T$.
Without loss of generality, say $\vec{w}_{18}$ has maximal first coordinate, $\vec{w}_{19}\ne \vec{w}_{18}$
has maximal second coordinate among the remaining 19 vectors, and 
$\vec{w}_{20}$ has maximal third coordinate among the remaining 18 vectors with
$\vec{w}_{20}\ne\vec{w}_{18},\vec{w}_{19}$.
Arrange the remaining $17$ vectors in ascending order of their first coordinate (i.e.,  $e_{1,1} \le e_{2,1} \le ... \le e_{17,1}$). By Lemma \ref{lemES}, there is a monotonic sequence of length $5$ among the $e_{i,2}$'s. Without loss of generality, say $e_{1,2}$, $e_{2,2}$, $e_{3,2}$, $e_{4,2}$, $e_{5,2}$ form such a sequence.

Case 1: $e_{1,2} \le e_{2,2} \le e_{3,2} \le e_{4,2} \le e_{5,2}$. Consider the numbers $e_{i,3}$ for $i=1,\dots,5$.
By Lemma \ref{lemES}, there is a monotonic sequence 
of length $3$ among the $e_{i,3}$'s, without loss of generality, say it is $e_{1,3},e_{2,3},e_{3,3}$.
If $e_{1,3} \le e_{2,3} \le e_{3,3}$, this forces $e_{2,4} > e_{1,4} + e_{3,4}$ for otherwise $\vec{w}_2\le\vec{w}_1+\vec{w}_3$, contradicting $T$ being 2-primitive.
 But this implies that $ \vec{w}_1 \le \vec{w}_2$  which contradicts $T$ being primitive. Hence, we must have $e_{1,3} \ge e_{2,3} \ge e_{3,3}$. Again, this forces $e_{2,4} > e_{1,4} + e_{3,4}$, which in turn implies that
 $\vec{w}_1\le\vec{w}_2+\vec{w}_{20}$, again a contradiction.
 
Case 2: $e_{1,2} \ge e_{2,2} \ge e_{3,2} \ge e_{4,2} \ge e_{5,2}$. By Lemma \ref{lemES}, there is a monotonic sequence of length $3$ among the $e_{i,3}$'s, without loss of generality, say it is $e_{1,3},e_{2,3},e_{3,3}$.  If 
$e_{1,3}\le e_{2,3}\le e_{3,3}$, then again this forces $e_{2,4}>e_{1,4}+e_{3,4}$.
But then $\vec{w}_1\le\vec{w}_2+\vec{w}_{19}$.
 Hence, we must have $e_{1,3} \ge e_{2,3} \ge e_{3,3}$. This forces $e_{2,4} > e_{1,4} + e_{3,4} $. But then 
 $\vec{w}_3\le\vec{w}_2+\vec{w}_{18}$, again a contradiction.

Therefore, there can be at most $19$ members in $T$.
\end{proof}

\begin{remark}
It is not clear if the number ``19" in Lemma \ref{lem2} is optimal.
We will not need it here, but by similar methods one can prove that if
$T$ is a 2-primitive set of positive integers with $|\mathcal{P}(T)| = n\ge3$,
then $|T|\le 9^{2^{n-3}}$.
\end{remark}

\section{Proof of Theorem \ref{thm5}}
Let $A\subset(1,x]$ be a 2-primitive set. 
Let $0.79 \le \lambda < 1$ be a parameter to be defined later. First, we partition $A$ into primes $S$ and composites $T$. Note $S$ and $\mathcal{P}(T)$ are disjoint since $A$ is primitive. For a prime $p$, define
\[
T_p := \{ t \in T : p \mid  t \}.
\]
If some prime $p \in \mathcal{P}(T)$ satisfies
\begin{equation} \label{d1}
\sum_{t \in T_p} \frac{1}{t^\lambda} \le \frac{1}{p^\lambda},
\end{equation}
then we replace the members of $T_p$ with the prime $p$ (i.e., redefine $A=(T\backslash T_p)\cup \{ p \}$). This would make $\sum_{T_p} t^{-\lambda}$ at least as big while keeping $A$ 2-primitive. Repeat the process with each prime $p \in \mathcal{P}(T)$ until no such prime satisfies \eqref{d1}. If $T = \emptyset$ after doing this, then $A=S$ consists of primes so Proposition \ref{prop1} follows. Otherwise $T \neq \emptyset$, so we may assume
\begin{equation} \label{lambda1}
\sum_{t \in T_p} \frac{1}{t^\lambda} > \frac{1}{p^\lambda} \qquad \text{ for all}\quad p \in \mathcal{P}(T).
\end{equation}
Consider the set
\begin{align}\label{eq:D}
    D := \{t / p : t \in T ,~ p \mid  t \}
\end{align}

We record some useful properties of $T$ and $D$.
\begin{lem}\label{lem}
Let $T$ be a $2$-primitive set for which \eqref{lambda1} holds and let $D$ be as in \eqref{eq:D}.
\begin{enumerate}
\item[(i)] For each $p\in{\mathcal P}(T)$,  $T_p$ has at least $3$ elements.
\item[(ii)] The map sending ordered pairs $(t, p)$ with $t\in T$ and $p\mid t$ to $t/p\in D$ is injective.
\item[(iii)] Each $t\in T$ has at least $3$ prime factors (counted with multiplicity).
\item[(iv)] $D$ is a primitive set of composite numbers.
\end{enumerate}
\end{lem}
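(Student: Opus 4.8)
The plan is to prove (i) first and then bootstrap. Apart from the compositeness statement in (iv), which is immediate from (iii), all of the remaining claims reduce to producing three \emph{distinct} elements of $T$ that witness a failure of $2$-primitivity, and the extra element one needs always comes from some set $T_p$, which by (i) has at least three members.

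For (i), I argue by contradiction with \eqref{lambda1}. If $|T_p|\le 1$, the unique $t\in T_p$ is composite and divisible by $p$, so $t\ge 2p$, whence $\sum_{s\in T_p}s^{-\lambda}\le(2p)^{-\lambda}<p^{-\lambda}$. If $|T_p|=2$, write $T_p=\{pm_1,pm_2\}$ with integers $2\le m_1<m_2$; primitivity of $T$ forces $m_1\nmid m_2$, so either $m_1=2$ (in which case $m_2$ is odd, hence $m_2\ge 3$) or $m_1\ge 3$ (in which case $m_2\ge 4$). In either case a short computation gives $\sum_{s\in T_p}s^{-\lambda}\le(2^{-\lambda}+3^{-\lambda})\,p^{-\lambda}$, and since $2^{-\lambda}+3^{-\lambda}<1$ for $\lambda\ge 0.79$, this again contradicts \eqref{lambda1}. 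This numerical inequality is the only place where the lower bound on $\lambda$ is used; I expect it to be where the constraint $\lambda\ge 0.79$ of this section genuinely bites, as the rest of the argument is purely combinatorial.

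For (ii), suppose $(t,p)\ne(t',p')$ but $t/p=t'/p'=:d$. One checks immediately that $t\ne t'$ and $p\ne p'$, so $t=dp$ and $t'=dp'$ with $p\ne p'$ primes. Then $t\mid t'x$ for every $x\in T_p$, because $t'x/t=p'x/p$ and $p\mid p'x$ iff $p\mid x$. By (i), $|T_p|\ge 3$, so we may choose $x\in T_p\setminus\{t,t'\}$ (nonempty whether or not $t'\in T_p$); then $t\mid t'x$ with $t,t',x$ distinct contradicts $2$-primitivity of $T$. For (iii), suppose $\Omega(t)=2$ for some $t\in T$. If $t=p^2$, pick distinct $s_1,s_2\in T_p\setminus\{t\}$ (possible by (i)); then $t=p^2\mid s_1s_2$. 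If $t=p_1p_2$ with $p_1\ne p_2$, pick distinct $b\in T_{p_1}\setminus\{t\}$ and $c\in T_{p_2}\setminus\{t\}$ (again by (i)); then $t=p_1p_2\mid bc$. Either way $2$-primitivity fails, so $\Omega(t)\ge 3$ for all $t\in T$.

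For (iv), compositeness follows from (iii) since $\Omega(t/p)=\Omega(t)-1\ge 2$. For primitivity, suppose $d\mid d'$ with $d\ne d'$ in $D$, say $d=t/p$ and $d'=t'/p'$, and write $d'=dm$ with an integer $m\ge 2$, so $t=dp$ and $t'=dmp'$. Since $t'/t=mp'/p$, we have $t\mid t'$ iff $p\mid mp'$, i.e., iff $p=p'$ or $p\mid m$; in that case $t\mid t'$ with $t\ne t'$ contradicts primitivity of $T$. Otherwise $p\ne p'$ and $p\nmid m$, so $\gcd(mp',p)=1$ and, exactly as in (ii), $t\mid t'x$ for every $x\in T_p$; choosing $x\in T_p\setminus\{t,t'\}$ via (i) gives a failure of $2$-primitivity. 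The one point needing care throughout is checking that the three elements exhibited are genuinely distinct, which is exactly why (i) is stated with the bound $3$ rather than $2$.
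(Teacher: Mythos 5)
Your proof is correct and follows essentially the same route as the paper: part (i) via the numerical bound $2^{-\lambda}+3^{-\lambda}<1$ applied to \eqref{lambda1}, and parts (ii)--(iv) by exhibiting three distinct elements of $T$ violating $2$-primitivity, with the third element always supplied by (i). The only step left implicit is the one-line check that $t\ne t'$ in the final branch of (iv) (if $t=t'$ then $p=mp'$ with $m\ge2$, impossible for $p$ prime), which the paper itself dispatches with a parenthetical remark.
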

\begin{proof}
(i) 
For $p\in{\mathcal P}(T)$, \eqref{lambda1} implies that 
\[
\sum_{t\in T_p}\frac1{(t/p)^\lambda}>1>2^{-0.79}+3^{-0.79},
\]
Thus (i) follows, since $t/p\in \mathbb{Z}_{>1}$ for all $t\in T_p$.

(ii)  If not, then $t_1 / p_1 = t_2 / p_2$ for some $t_1, t_2$, $p_1 \mid t_1$, and $p_2 \mid  t_2$.  If $t_1 \neq t_2$, by (i) there exists some $p_1 k \in T_{p_1}$ other than $t_1,t_2$. But then $t_1 = (t_1 / p_1) p_1 = (t_2 / p_2) p_1 \mid t_2 (p_1 k)$, which contradicts $T$ as 2-primitive. Hence $t_1 = t_2$, which forces $p_1 = p_2$.

(iii) If not, say $t = p q$. Since $T_p,T_q$ each have at least 3 elements, there are some $p m$ and $q n$ other than $t\in T$. But then, $t = p q \mid  (p m)(q n)$ which contradicts $T$ as 2-primitive.  (This argument holds whether or not $p\ne q$.)

(iv) If not, then $(t / p) \mid  (t_1 / p_1)$ for some $t, t_1 \in T$, $p \mid  t$, $p_1 \mid  t_1$,
and $t/p\ne t_1/p_1$. If $p_1 = p$, then $t \mid  t_1$ which contradicts $T$ as primitive. And if $p_1 \neq p$, then there is some $p l \in T_p$ other than $t$ and $t_1$. This implies $t \mid  t_1\cdot p l$, and since
$t\ne t_1$ (otherwise $p=p_1$), we have a contradiction to $T$ being 2-primitive. Thus $D$ is primitive, and also composite by (iii).
\end{proof}

For Theorem \ref{thm5}, we must show
\begin{equation} \label{transform1}
 \sum_{t \in T} \frac{1}{t^\lambda} - \sum_{p \in \mathcal{P}(T)} \frac{1}{p^\lambda} < 0.
\end{equation}
Suppose $\mathcal{P}(T)$ consists of primes $q_1 < q_2 < \cdots < q_r$. Let $2 = p_1 < p_2 < \cdots < p_r$ be the first $r$ primes in $\mathbb{P}$. We are going to modify the set $T$ by the following process. First, if each $q_i=p_i$,
we let $T$ stand as it is.  Otherwise, let $i$ be the smallest index such that $q_i > p_i$.  Then $q_j=p_j$
for all $j<i$ and we have $p_i\nmid t$ for all $t\in T$.  Then replace each $t\in T_{q_i}$ with $p_i/q_i\cdot t$.
This keeps $T$ as 2-primitive, and by \eqref{lambda1},
\[
0 \ < \ \sum_{t\in T_{q_i}}\frac1{t^\lambda} -\frac1{q_i^\lambda} \ < \ \frac{q_i^\lambda}{p_i^\lambda}\Biggl(\sum_{t\in T_{q_i}}\frac1{t^\lambda}-\frac1{q_i^\lambda}\Biggr)
=\sum_{t\in T_{q_i}}\frac1{(p_i/q_i\cdot t)^\lambda}-\frac1{p_i^\lambda}.
\]
So replacing each $t\in T_{q_i}$ with $p_i/q_i\cdot t$ preserves \eqref{lambda1}.  We repeat this
process for each $i$ with $q_i>p_i$ and in the end we have $\mathcal{P}(T)=\{p_1,p_2,\dots,p_r\}$.
By showing \eqref{transform1} for this $T$ it would follow that \eqref{lambda1} fails for some $p_i$,
and this contradiction would prove the theorem.

We have reduced Theorem \ref{thm5} to the following.
\begin{theorem}
\label{thm5'}
Suppose $\lambda\ge 0.7983$ and
$T$ is a $2$-primitive set of composite numbers satisfying \eqref{lambda1}
with $\mathcal{P}(T)=\mathbb{P}\cap(1,Y]$ for some $Y$.
Then 
\begin{equation} \label{transform2}
\sum_{t \in T} \frac{1}{t^\lambda} - \sum_{p \le Y} \frac{1}{p^\lambda} < 0.
\end{equation}
\end{theorem}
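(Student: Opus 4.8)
The plan is to prove \eqref{transform2} by peeling off the two smallest prime factors of each element and inducting on the size of the prime support. By Lemma~\ref{lem}(iii) every $t\in T$ is composite with $\Omega(t)\ge3$, so write $t=ab\,m(t)$, where $a\le b$ are the two smallest prime factors of $t$ counted with multiplicity and $m(t):=t/(ab)>1$ has all prime factors in $[b,Y]$. Sorting $T$ by the pair $(a,b)=(p_1(t),p_2(t))$,
\[
\sum_{t\in T}\frac1{t^{\lambda}}=\sum_{a\le b}\frac1{(ab)^{\lambda}}\sum_{\substack{t\in T\\ p_1(t)=a,\ p_2(t)=b}}\frac1{m(t)^{\lambda}},
\]
and for each pair the set $M_{a,b}:=\{m(t):t\in T,\ p_1(t)=a,\ p_2(t)=b\}$ is \emph{primitive} (if $m_1\mid m_2$ then $ab\,m_1\mid ab\,m_2$, against primitivity of $T$), while its members coprime to $b$ even form a \emph{$2$-primitive} set (if $m_1\mid m_2m_3$ with the $m_i$ coprime to $b$, then $ab\,m_1\mid(ab\,m_2)(ab\,m_3)$, against $2$-primitivity of $T$), with prime support inside $\mathbb P\cap(b,Y]$ --- strictly smaller than $\mathcal P(T)$ since $b\ge2$. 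This is what opens the door to the induction: on the (replacement-reduced) coprime-to-$b$ part of $M_{a,b}$ we may invoke the conclusion of Theorem~\ref{thm5'} already proved for smaller prime supports.

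The main term is the part with $m(t)$ prime, i.e.\ $\Omega(t)=3$ and $t=abq$ with $a\le b\le q$; its heart is the squarefree case $a<b<q$. Here $2$-primitivity and $|T_p|\ge3$ for all $p$ (Lemma~\ref{lem}(i)) force the triples $\{a,b,q\}$ to form a partial Steiner system: if $abq_1,abq_2\in T$ with $q_1\ne q_2$ then $abq_2\notin T_{q_1}$, so a third element $t\in T_{q_1}$ satisfies $abq_1\mid(abq_2)t$, a contradiction. Thus each pair $\{a,b\}$ sits in at most one such triple, and since $q>b$ gives $q^{-\lambda}<b^{-\lambda}$,
\[
\sum_{\substack{t\in T\\ t=abq,\ a<b<q}}\frac1{t^{\lambda}}\ \le\ \sum_{a<b}\frac1{a^{\lambda}\,b^{2\lambda}}\ <\ \Big(\sum_{p\le Y}\frac1{p^{\lambda}}\Big)\Big(\sum_{p\le Y}\frac1{p^{2\lambda}}\Big),
\]
and since $\lambda>1/2$ the last factor is at most the prime zeta value $\sum_p p^{-2\lambda}$, which falls below $1$ once $2\lambda$ exceeds the root of $\sum_p p^{-s}=1$. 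The degenerate cases $a=b$ or $b=q$ --- that is, $t\in\{a^3,\ a^2q,\ ab^2\}$ --- contribute only $O(1)$: the same exclusion relations pin each family down to at most one element per prime, bounding them by convergent series in $p^{-2\lambda}$ and $p^{-3\lambda}$.

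It remains to bound $\sum_{\Omega(t)\ge4}t^{-\lambda}$, i.e.\ $\sum_{a\le b}(ab)^{-\lambda}\sum_{m\in M_{a,b}\text{ composite}}m^{-\lambda}$. Split the composite $m\in M_{a,b}$ according to whether $b\mid m$: the part coprime to $b$ is $2$-primitive composite with support in $(b,Y]$, so the inductive hypothesis bounds its sum of $m^{-\lambda}$ by $\sum_{b<p\le Y}p^{-\lambda}$; the part divisible by $b$ equals $b$ times a set with prime factors $\ge b$, which is peeled again and handled by iteration. Summing $(ab)^{-\lambda}$ against these tail bounds (the successive factors of $b^{-\lambda}$ producing a convergent geometric series) again yields a fixed multiple of $\sum_{p\le Y}p^{-\lambda}$. \textbf{The crux is this last step}: the induction must be arranged so that it closes, and the constants in the peeling estimates tracked precisely enough, to force the total $\Omega(t)\ge4$ contribution below $\bigl(1-\sum_p p^{-2\lambda}\bigr)\sum_{p\le Y}p^{-\lambda}$. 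Given this, $\sum_{t\in T}t^{-\lambda}<\sum_{p\le Y}p^{-\lambda}$ for every $\lambda$ above an explicit threshold; a finite computation checks that $\lambda=0.7983$ works --- it also settles the base cases of the induction, the small values of $r=|\mathcal P(T)|$ for which Lemmas~\ref{lem} and~\ref{lem2} force $T$ empty or tiny --- and shows the threshold cannot drop much below $0.8$, consistent with $\tau_2\ge1/2$ from Erd\H{o}s's construction.
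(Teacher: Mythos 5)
Your treatment of the $\Omega(t)=3$ elements is correct and attractive: given Lemma \ref{lem}(i) (which indeed uses \eqref{lambda1}), each pair of primes occurs as the two smallest prime factors of at most one element $abq\in T$, so that contribution is at most $\bigl(\sum_{p\le Y}p^{-\lambda}\bigr)\sum_p p^{-2\lambda}$ with $\sum_p p^{-2\lambda}<1$ for $\lambda\ge0.7983$. But the rest of the argument has a genuine gap, and it sits exactly where you yourself flag ``the crux.'' First, the inductive hypothesis cannot be invoked for the cofactor sets $M_{a,b}$ as stated: Theorem \ref{thm5'} assumes \eqref{lambda1} and that the prime support is an initial segment $\mathbb{P}\cap(1,Y]$, and neither property is inherited by $M_{a,b}$; the reduction that manufactures these hypotheses in Section 3 (discarding $T_p$'s violating \eqref{d1}, then rescaling the primes $q_i$ to $p_i$) itself relies on \eqref{lambda1}, so it cannot simply be ``waved at'' inside an induction. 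Second, and more fatally, even granting a per-pair bound $\sum_{m\in M_{a,b}}m^{-\lambda}\le\sum_{b<p\le Y}p^{-\lambda}$, summing it against $\sum_{a\le b\le Y}(ab)^{-\lambda}$ does not yield ``a fixed multiple of $\sum_{p\le Y}p^{-\lambda}$.'' Since $\lambda<1$, the sum $\sum_{p\le Y}p^{-\lambda}$ tends to infinity and is dominated by primes near $Y$; the pair sum $\sum_{a\le b\le Y}(ab)^{-\lambda}$ is of order $\bigl(\sum_{p\le Y}p^{-\lambda}\bigr)^2$, and $\sum_{b<p\le Y}p^{-\lambda}$ is comparable to the full prime sum for all $b\le Y^{1/2}$, so your bound on the $\Omega(t)\ge4$ part is of order $\bigl(\sum_{p\le Y}p^{-\lambda}\bigr)^3$ --- unboundedly larger than the budget $\bigl(1-\sum_p p^{-2\lambda}\bigr)\sum_{p\le Y}p^{-\lambda}\approx 0.27\sum_{p\le Y}p^{-\lambda}$ that your main-term estimate leaves. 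Applying the inductive conclusion separately to each $M_{a,b}$ throws away the global $2$-primitivity constraints tying the different pairs together, and no tracking of constants can repair an estimate that is off by a factor growing with $Y$.

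This lost global control is precisely what the paper's proof is built to supply, by entirely different means: it splits at $P(t)\ge t^{\theta}$ and bounds that part via the Banks--Martin critical exponent $\tau_1$ applied to the primitive set $\{t/P(t)\}$, and for $P(t)<t^{\theta}$ it combines the matching Lemma \ref{lem1}, the Erd\H{o}s--Szekeres-based counting bounds of Lemmas \ref{lem3} and \ref{lem:variant} (elements with no large prime factor have both ``halves'' $m(t),M(t)$ confined to a short range and avoiding a whole interval of primes), the explicit Rosser--Schoenfeld estimate of Lemma \ref{lem:qharm}, and a prime-by-prime numerical verification via partial summation. Your proposal replaces all of this with the unproved assertion that ``a finite computation checks that $\lambda=0.7983$ works''; but as the argument is structured, the required inequality has not been reduced to finitely many checkable cases, so no finite computation can close it.
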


Our goal now is to prove Theorem \ref{thm5'}.
For a parameter $0 < \theta < 1$ to be chosen later, we define $\lambda$ as
\begin{align}
\lambda = \tau(1 - \theta), \hbox{ where }\tau = 1.140366.
\end{align}

First consider those $t \in T$ with greatest prime factor $P(t) \ge t^\theta$. Then $t^{1- \theta} \ge t / P(t)$ and so $t^{-\lambda} \le (t/P(t))^{-\lambda / (1 - \theta)} = (t/P(t))^{-\tau}$. Hence
\begin{equation} \label{largeprime}
\mathop{\sum_{t \in T}}_{P(t) \ge t^\theta} t^{-\lambda} \le \mathop{\sum_{t \in T}}_{P(t) \ge t^\theta} \Big(\frac{t}{P(t)}\Big)^{-\tau} \le \sum_{p \le Y} p^{-\tau}
\end{equation}
by \eqref{bankmartin}, since $\{ t/P(t):t\in T \} \subset D$ is primitive by part (iii) of Lemma \ref{lem}.

\bigskip

For a positive integer $t$, we consider the following unique factorization
\[
t = m(t) M(t)
\]
into positive integers $m(t) \le M(t)$ with ratio $M(t) / m(t)$ minimal. 
Let
\[
{\mathcal M}(T)=\{m(t):t\in T\}\cup\{M(t):t\in T\}.
\]
We need two lemmas.

\begin{lem} \label{lem1}
For any $2$-primitive set $T$, consider the graph on the integers with edges $\{t,m(t)\}$ and $\{t,M(t)\}$
for $t\in T$, where if $m(t)=M(t)$, there is just one edge containing $t$. 
This graph contains a matching from $T$ into ${\mathcal M}(T)$.
\end{lem}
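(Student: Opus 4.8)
\emph{Proof proposal.} The plan is to recognize the claim as an instance of Hall's marriage theorem and then use $2$-primitivity to verify Hall's condition. I would introduce the auxiliary graph $H$ with vertex set $\mathcal M(T)$ carrying one edge per element $t\in T$, namely the edge joining $m(t)$ to $M(t)$ — a loop at $\sqrt t$ when $t$ is a perfect square. Since $t$ is recovered from its edge via $t=m(t)M(t)$, distinct elements of $T$ give distinct edges of $H$; in particular $H$ has no parallel edges (it is ``simple up to loops''). A matching from $T$ into $\mathcal M(T)$ in the graph of the statement is precisely an injective choice, for each $t\in T$, of an endpoint of its edge — a system of distinct representatives for the edges of $H$. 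By Hall's theorem such a choice exists unless there is a set $F\subseteq T$ with $|N(F)|<|F|$, where $N(F):=\{m(t):t\in F\}\cup\{M(t):t\in F\}$ is the set of endpoints used by $F$ (equivalently, unless some subgraph of $H$ has more edges than vertices). So I would assume such an $F$ exists, chosen minimal, and derive a contradiction with $2$-primitivity.

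Minimality gives $N(F\setminus\{t\})=N(F)$ for every $t\in F$, so both $m(t)$ and $M(t)$ occur as a factor (an $m$- or $M$-part) of some element of $F$ other than $t$. In the language of $H$ this says: in the subgraph on edge set $F$, every vertex has degree at least $2$ (a loop counting twice), and moreover the vertex of any loop carries an additional ordinary edge. Such a graph contains a cycle, and I would split into two cases. If $F$ contributes no loop, its graph is simple with minimum degree $\ge2$, hence contains a cycle $v_1v_2\cdots v_\ell v_1$ with $\ell\ge3$; put $t_i=v_iv_{i+1}$ (indices mod $\ell$) for $i=1,2,3$. If $F$ contributes a loop, at a vertex $v$, then $t_1:=v^2\in F$; by minimality $v$ is also a factor of some $t_2=vw\in F$ with $w\neq v$, and by minimality again $w$ is a factor of some $t_3=wz\in F$ with $t_3\neq t_2$, where necessarily $z\neq v$ (else $t_3=t_2$) while $z=w$ is permitted. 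In either case I have produced three elements of $T$ of the shape $t_1=v_1v_2$, $t_2=v_2v_3$, $t_3=v_3v_4$, with $v_4=v_1$ in the cycle case and $v_1=v_2$ in the loop case.

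The payoff is then the single divisibility $t_2=v_2v_3\mid v_1v_2v_3v_4=t_1t_3$. One checks $t_1,t_2,t_3$ are pairwise distinct, which is forced by uniqueness of the factorization $t=m(t)M(t)$ of minimal ratio: in the cycle case they are three consecutive edges of a simple cycle and hence distinct, and in the loop case the only point needing thought is $t_1\neq t_3$, which holds because $\{w,z\}=\{v,v\}$ would force $w=v$. Thus $t_2$ divides the product of the two distinct other elements $t_1,t_3$ of $T$, contradicting $2$-primitivity. Hence no bad $F$ exists, Hall's condition holds, and the required matching exists. The step I expect to demand the most care is the perfect-square (loop) case: producing the extra edge at the loop vertex is exactly what minimality buys, but one must keep careful track — via uniqueness of the minimal factorization — that the three extracted elements are genuinely distinct and genuinely lie in $T$ (the latter also rules out a prime occurring in a minimal $F$, since primitivity of $T$ forbids $p\mid t$ with $t\in T$, $t\neq p$).
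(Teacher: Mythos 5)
Your proposal is correct, but it takes a different route from the paper's. You recast the matching as a system of distinct representatives for the edge family $\{\{m(t),M(t)\}\}_{t\in T}$, invoke Hall's theorem, and analyze a minimal violating set $F$: minimality forces every vertex of $N(F)$ to lie on at least two distinct edges of $F$, from which you extract three pairwise distinct elements $t_1=v_1v_2$, $t_2=v_2v_3$, $t_3=v_3v_4$ (three consecutive edges of a cycle, or loop--edge--edge at a perfect square), and the divisibility $t_2\mid t_1t_3$ contradicts $2$-primitivity; your checks of distinctness (no parallel edges since $t=m(t)M(t)$, $z\ne v$ in the loop case) are sound. The paper instead argues directly and greedily: for each $t$, if $m(t)$ is not a part of any other element, match $t$ to $m(t)$; if it is shared with some $t'$, then $2$-primitivity shows $M(t)$ is a part of no $t''\ne t,t'$, and when $m(t)<M(t)$ it is not a part of $t'$ either, so $M(t)$ is private; the only remaining case $t=m(t)^2$ is repaired by one local swap (re-match $t'$ to $m'=t'/m(t)$, which is private since otherwise $t'\mid tt''$). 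Both hinge on the same observation that an element whose two parts are both shared divides a product of two other elements; yours is conceptually clean and identifies the obstruction as exactly a cyclic subconfiguration, while the paper's is elementary, constructive, and avoids Hall's theorem. One small caveat for your version: the lemma is stated for an arbitrary $2$-primitive $T$, so if $T$ is infinite you need the compactness (M.~Hall) form of the marriage theorem for families of finite sets, and you should take your minimal $F$ among finite violating sets; this is harmless, and in the paper's application $T\subseteq(1,x]$ is finite. Your closing parenthetical about primes in $F$ is unnecessary: $t_1,t_2,t_3\in F\subseteq T$ by construction, and the contradiction goes through whether or not any of them is prime.
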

\begin{proof}
Let $t\in T$.    
If $m(t)\notin\{m(t'),M(t')\}$ for all other $t'\in T$, then we can match $t$ with $m(t)$.
So assume $m(t)\in\{m(t'),M(t')\}$ for some other $t'\in T$.  
Then $M(t)\notin\{m(t''),M(t'')\}$
for all $t''\in T$ with $t''\ne t,t'$, since otherwise  $t\mid t't''$,
contradicting $T$ being 2-primitive.  

If $m(t)<M(t)$, then 2-primitive implies $M(t)\notin\{m(t'),M(t')\}$ so we can match $t$ with $M(t)$. 

Otherwise $m(t)=M(t)$, which means $t=m(t)^2$. Then $t'\neq t$ forces $m(t')<M(t')$, so we make define $m' = t'/m(t)$ (that is $m'$ is the singleton in $\{m(t'),M(t')\}\setminus\{m(t)\}$).
We would like to match $t'$ with $m'$ instead of $m(t)$, freeing up $m(t)$ to
be matched with $t$. So suppose this is blocked by some
$t''$ different from $t'$ (and necessarily different from $t$) with $m'\in\{m(t''),M(t'')\}$.  
But then $t'\mid tt''$, a violation of 2-primitivity.  Thus, the matching can be completed.
\end{proof}

\begin{lem} \label{lem3}
Suppose $0<\theta<1/3$ and that $T$ is $2$-primitive with $P(t) < t^{\theta}$ for each $t \in T$. Let $N(z)=|T\cap[2,z]|$. Then, with $q$ running over primes in the interval
$I:=[z^{(1+\theta)/4},z^{(1+\theta)/2})$, we have
\[
N(z) < z^{(1 + \theta)/2}-\sum_{q\in I}\Big\lfloor\frac{z^{(1+\theta)/2}}{q}\Big\rfloor
\]
\end{lem}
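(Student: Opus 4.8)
The plan is to exhibit an injection from $T\cap[2,z]$ into a set of integers that is both bounded by $z^{(1+\theta)/2}$ and free of prime factors in $I$, and then simply to count that set. First, apply Lemma \ref{lem1} to the finite $2$-primitive set $T\cap[2,z]$: this produces an injection $\mu$ from $T\cap[2,z]$ into $\mathcal M(T\cap[2,z])$ in which, for each $t$, $\mu(t)$ is one of the divisors $m(t),M(t)$ of $t$, except in the ``swap'' case where $\mu(t)$ is the divisor $t'/m(t'')$ of another element $t'\le z$. In every case $\mu(t)$ divides some element of $T\cap[2,z]$, hence is $z^\theta$-smooth since $P(t)<t^\theta\le z^\theta$ for all $t\in T$; because $\theta<1/3$ we have $z^\theta<z^{(1+\theta)/4}$, so $\mu(t)$ has no prime factor in $I=[z^{(1+\theta)/4},z^{(1+\theta)/2})$. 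Also $\mu(t)\ge2$, since every $t\in T$ is composite. Thus $N(z)=|\{\mu(t):t\in T\cap[2,z]\}|$ is at most the number of integers $n$ with $2\le n\le z^{(1+\theta)/2}$ having no prime factor in $I$, provided we know that $\mu(t)\le z^{(1+\theta)/2}$ for all $t\le z$, which is the crux.

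The key estimate is that $M(t)<t^{(1+\theta)/2}$ whenever $P(t)<t^\theta$. Here $m(t)$ is the largest divisor of $t$ not exceeding $\sqrt t$ and $M(t)=t/m(t)$ the smallest divisor at least $\sqrt t$, and these are consecutive divisors of $t$; hence (using that if $e\mid t$ with $e>1$ then $e$ divided by its least prime factor is again a divisor of $t$) one gets $M(t)/m(t)\le P(t)$. Multiplying this by the identity $m(t)M(t)=t$ yields $M(t)^2=t\cdot\big(M(t)/m(t)\big)\le tP(t)<t^{1+\theta}$, so $M(t)<t^{(1+\theta)/2}\le z^{(1+\theta)/2}$ for $t\le z$ (and $m(t)\le\sqrt t\le z^{(1+\theta)/2}$ trivially). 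In the swap case the extra matched divisor is an $m(t')$ or $M(t')$ with $t'\le z$, so it too is $\le z^{(1+\theta)/2}$. Hence $\mu(t)\le M(t)<z^{(1+\theta)/2}$ for every $t\in T\cap[2,z]$, as needed.

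It remains to count. Writing $X=z^{(1+\theta)/2}$, so $I=[\,X^{1/2},X)$, any two distinct primes $q_1<q_2$ of $I$ satisfy $q_1q_2>q_1^2\ge X$ (as $q_1\ge X^{1/2}$), so no integer $\le X$ has two distinct prime factors in $I$; therefore the number of $n\le X$ divisible by some prime of $I$ is exactly $\sum_{q\in I}\lfloor X/q\rfloor$, and the number of $n$ with $1\le n\le X$ having no prime factor in $I$ is exactly $\lfloor X\rfloor-\sum_{q\in I}\lfloor X/q\rfloor$. Since the image of $\mu$ avoids $n=1$, we conclude
\[
N(z)\ \le\ \Big(\lfloor X\rfloor-\sum_{q\in I}\Big\lfloor\frac{X}{q}\Big\rfloor\Big)-1\ <\ X-\sum_{q\in I}\Big\lfloor\frac{X}{q}\Big\rfloor,
\]
which is the assertion. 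The only step with genuine content is the size bound $M(t)<t^{(1+\theta)/2}$: the naive bound $M(t)<\sqrt t\cdot P(t)=t^{1/2+\theta}$ is too weak by a factor $t^{\theta/2}$, and it is exactly the multiplicative combination of $M(t)/m(t)\le P(t)$ with $m(t)M(t)=t$, rather than $m(t)\le\sqrt t$ alone, that saves the exponent. Everything else — the bookkeeping of the swap case in Lemma \ref{lem1}, the fact that $\mu(t)$ always divides an element of $T\cap[2,z]$, and the elementary sieve identity — is routine.
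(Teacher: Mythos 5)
Your proof is correct and follows essentially the same route as the paper: the matching from Lemma \ref{lem1}, the bound $m(t)\le M(t)<t^{(1+\theta)/2}\le z^{(1+\theta)/2}$, the observation that the matched divisors are $z^\theta$-smooth and hence (since $\theta<1/3$ gives $z^\theta<z^{(1+\theta)/4}$) avoid the primes of $I$, and the fact that no integer below $z^{(1+\theta)/2}$ is divisible by two primes of $I$. The only difference is local: where the paper constructs a near-balanced factorization $d'd''=t$ greedily from the sorted prime factorization and appeals to the minimality of $M(t)/m(t)$, you derive the (sharper) inequality $M(t)/m(t)\le P(t)$ directly from the consecutive-divisor property and multiply by $m(t)M(t)=t$ — a valid minor variant of the same key step.
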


\begin{proof}
By Lemma \ref{lem1}, it suffices to bound $|{\mathcal M}(T\cap[2,z])|$.
  We first 
show that ${\mathcal M}(T\cap[2,z])\subset[1,z^{(1+\theta)/2})$.
Let $t\in T$ with $t\le z$.  Say $t = q_1 q_2 ... q_r$ where the primes $q_i$ are written in nondecreasing order.  Let $d = q_1 q_2 ... q_i$ be maximal with $d \le t^{(1 - \theta)/2}$.  Then
$d' = d q_{i+1}$ satisfies $t^{(1 - \theta)/2} < d' < t^{(1 + \theta)/2}$.
Also, $d'' = t/d'$ satisfies the same double inequality.  Thus, 
\[
t^{(1 - \theta)/2} < m(t) \le M(t) < t^{(1 + \theta)/2}\le z^{(1+\theta)/2}.
\]
We further note that the members $m$ of ${\mathcal M}(T\cap[2,z])$ satisfy $P(m)<z^\theta$, since $m$ divides
some member of $T\cap[2,z]$ and every $t$ in that set has $P(t)<z^\theta$.  In particular, $m$ is not divisible
by any prime $q\ge z^\theta$.  Note that if $\theta<1/3$, then $\theta <(1+\theta)/4$. So, $m$ is not divisible
by any prime in the interval $I$.  Since no integer below $z^{(1+\theta)/2}$
is divisible by 2 primes from $I$, the lemma follows.
\end{proof}
Set
\[
T^p=\{t\in T:P(t)=p\},
\]
so that $T^p\subset T_p$.  We have the following variant of Lemma \ref{lem3}.
\begin{lem}\label{lem:variant}
For any $2$-primitive set $T$ and prime $p$, let $N_p(z)$ denote the number of members $t$ of $T^p$ with
$t\le z$.  With $q$ running over the primes in $I_p:=(\max\{p,z^{1/4}\},z^{1/2})$, we have
\[
N_p(z)\le z^{1/2}-\sum_{q\in I_p}\Big\lfloor\frac{z^{1/2}}{q}\Big\rfloor.
\]
\end{lem}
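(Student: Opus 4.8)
The plan is to argue in the spirit of the proof of Lemma~\ref{lem3}, but to first \emph{divide through by $p$}, which is what makes the key estimate sharp enough. Put $U:=\{t/p:t\in T^p\}$. Each $t\in T^p$ is divisible by $p$, so $t\mapsto t/p$ is a bijection $T^p\to U$ carrying $T^p\cap[2,z]$ onto $U\cap[1,z/p]$; since $P(t)=p$, every $u\in U$ is $p$-smooth. The set $U$ is again $2$-primitive: if distinct $u_1,u_2,u_3\in U$ satisfied $u_1\mid u_2u_3$, then writing $u_i=t_i/p$ we get $t_1p\mid t_2t_3$, hence $t_1\mid t_2t_3$ with $t_1,t_2,t_3\in T$ distinct, contrary to $2$-primitivity of $T$. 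Finally, if $p\in T$ then primitivity of $T$ forces $T^p=\{p\}$, so $N_p(z)\le1$ and the asserted bound is immediate; thus we may assume $p\notin T$, so that $U\subset\mathbb{Z}_{>1}$ and $N_p(z)=|U\cap[1,z/p]|$.

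Next I would apply Lemma~\ref{lem1} to the $2$-primitive set $U\cap[1,z/p]$ (the case of fewer than three elements being trivial), obtaining
\[
N_p(z)=|U\cap[1,z/p]|\le|\mathcal{M}(U\cap[1,z/p])|.
\]
The heart of the argument is to show that $\mathcal{M}(U\cap[1,z/p])$ consists of $p$-smooth integers not exceeding $z^{1/2}$. Each $m(u)$ and $M(u)$ divides the $p$-smooth number $u$, hence is $p$-smooth; and $m(u)\le u^{1/2}\le(z/p)^{1/2}\le z^{1/2}$. To bound $M(u)$, write $u=q_1\cdots q_r$ with $q_1\le\cdots\le q_r=P(u)\le p$ and let $d=q_1\cdots q_i$ be the largest partial product with $d\le u^{1/2}$; then $dq_{i+1}$ is a divisor of $u$ exceeding $u^{1/2}$, so $M(u)\le dq_{i+1}\le m(u)\,q_{i+1}\le m(u)\,p$ (using $d\le m(u)$, since $m(u)$ is the largest divisor of $u$ that is $\le u^{1/2}$, and $M(u)$ the least divisor $\ge u^{1/2}$). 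Hence $M(u)^2\le p\cdot m(u)M(u)=p\,u\le p\cdot(z/p)=z$, so $M(u)\le z^{1/2}$ as well. This gives
\[
N_p(z)\le\#\bigl\{n\in[1,z^{1/2}]:n\text{ is }p\text{-smooth}\bigr\}.
\]

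Finally I would pass from this to the stated inequality by a sieve count. The sum $\sum_{q\in I_p}\lfloor z^{1/2}/q\rfloor$ is the number of pairs $(q,n)$ with $q\in I_p$ prime, $n\le z^{1/2}$, and $q\mid n$. Since every prime of $I_p$ exceeds $z^{1/4}$, no $n\le z^{1/2}$ is divisible by two distinct primes of $I_p$, so $(q,n)\mapsto n$ is injective; and each such $n$ is not $p$-smooth, because $q>p$. Hence the number of non-$p$-smooth integers in $[1,z^{1/2}]$ is at least $\sum_{q\in I_p}\lfloor z^{1/2}/q\rfloor$, so
\[
\#\bigl\{n\in[1,z^{1/2}]:n\text{ is }p\text{-smooth}\bigr\}\le\lfloor z^{1/2}\rfloor-\sum_{q\in I_p}\Bigl\lfloor\frac{z^{1/2}}{q}\Bigr\rfloor\le z^{1/2}-\sum_{q\in I_p}\Bigl\lfloor\frac{z^{1/2}}{q}\Bigr\rfloor,
\]
which with the previous display completes the proof.

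The step I expect to be the main obstacle is the bound $M(u)\le z^{1/2}$ for $u\in U$. Applied to $t\in T^p$ directly the analogous bound can fail — for instance $M(t)$ may exceed $z^{1/2}$ when $t=pm^2\le z$ with $m$ a prime satisfying $\sqrt p\le m<p$ and $m>z^{1/4}$, in which case $M(t)=m^2$ — so the proof of Lemma~\ref{lem3} does not transcribe verbatim. Dividing by $p$ replaces the constraint $u\le z$ by $u\le z/p$, which is exactly what compensates the factor $P(u)\le p$ lost when estimating $M(u)$ in terms of $m(u)$.
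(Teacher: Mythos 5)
Your proof is correct and takes essentially the same route as the paper: pass to the $2$-primitive, $p$-smooth set $T^p/p$, apply the matching Lemma~\ref{lem1}, show that $m(u),M(u)\le z^{1/2}$ (your computation $M(u)\le p\,m(u)$, hence $M(u)^2\le pu\le z$, is exactly the bound the paper cites as "$m(t/p),M(t/p)\in[t^{1/2}/p,t^{1/2})$"), and finish with the same smooth-number sieve count as in Lemma~\ref{lem3}. The extra care you take (the $p\in T$ edge case, the explicit injectivity of the sieve count) only fills in details the paper leaves implicit.
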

\begin{proof}
Note that if $T$ is 2-primitive, so too is $T^p/p=\{t/p:t\in T^p\}$.  Thus, we may apply Lemma \ref{lem1}
to obtain a matching from $T^p/p$ into ${\mathcal M}(T^p/p)$.  The prime factors of each element $t/p\in T^p/p$
are at most $p$, so following the proof of Lemma \ref{lem3}, we have $m(t/p),M(t/p)\in[t^{1/2}/p,t^{1/2})$.
The lemma then follows in the same way as Lemma \ref{lem3}.
\end{proof}
\begin{lem}\label{lem:qharm}
For $x\ge2$ we have
\[
\sum_{\substack{x^{1/2}<q<x\\q\,{\rm prime}}}\Big\lfloor\frac xq\Big\rfloor \ge
\Big(\log 2-\frac{1.25}{\log x}-\frac{2.5}{(\log x)^2}\Big)x.
\]
\end{lem}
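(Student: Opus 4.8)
The plan is to bound the left side below by replacing each $\lfloor x/q\rfloor$ with $x/q-1$ and then to invoke explicit forms of Mertens' theorem and of the upper bound for $\pi$. Since $\lfloor x/q\rfloor\ge x/q-1$ for every prime $q$,
\[
\sum_{\substack{x^{1/2}<q<x\\q\,{\rm prime}}}\Big\lfloor\frac xq\Big\rfloor\ \ge\ x\sum_{\substack{x^{1/2}<q<x\\q\,{\rm prime}}}\frac1q\ -\ \#\{q\,{\rm prime}:x^{1/2}<q<x\},
\]
so it suffices to prove the two estimates $x\sum_{x^{1/2}<q<x}q^{-1}\ge x\log2-2.5\,x/\log^2x$ and $\#\{q:x^{1/2}<q<x\}\le1.25\,x/\log x$. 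For the first I would write $\sum_{x^{1/2}<q<x}q^{-1}=\sum_{q<x}q^{-1}-\sum_{q\le x^{1/2}}q^{-1}$ and apply a two-sided explicit Mertens bound of the shape $\bigl|\sum_{q\le t}q^{-1}-\log\log t-B\bigr|\le c/\log^2t$; since $\log\log x-\log\log x^{1/2}=\log2$, the two error terms contribute $c/\log^2x+c/\log^2x^{1/2}=5c/\log^2x$, and a value $c\approx1/2$ (available from Rosser--Schoenfeld/Dusart once $x$ exceeds a modest threshold) produces exactly the $2.5/\log^2x$ in the statement. For the second I would invoke the Rosser--Schoenfeld bound $\pi(x)<\tfrac{x}{\log x}\bigl(1+\tfrac{3}{2\log x}\bigr)$, which already yields $\pi(x)\le1.25\,x/\log x$ once $\log x\ge6$.

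Two ranges of $x$ need separate handling. Whenever $\log2\le 1.25/\log x+2.5/\log^2x$, the asserted lower bound is $\le0$ and the inequality is immediate, the left side being a sum of nonnegative integers; since the right side of this condition decreases in $x$ and still holds at $x=e^3$, this settles all $x\le e^3$. On the remaining intermediate interval --- between that threshold and the point where the explicit Mertens and prime-counting bounds above become valid --- I would either verify the inequality by a finite computation (only finitely many primes occur, the left side is a step function of $x$, and the right side is increasing there, so it is enough to check finitely many breakpoints) or use cruder globally valid estimates, using that the permitted loss $1.25\,x/\log x+2.5\,x/\log^2x$ is generous relative to the actual deficit of the left side below $x\log2$.

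The main obstacle is arithmetic bookkeeping rather than any conceptual difficulty: the rounding loss in $\lfloor x/q\rfloor\ge x/q-1$, summed over the $\sim x/\log x$ primes in range, already consumes a full $x/\log x$, leaving only $0.25\,x/\log x+2.5\,x/\log^2x$ of slack to absorb the errors in Mertens' formula and in the bound for $\pi$. This is exactly why one cannot simply use the uniform bound $\pi(x)<1.25506\,x/\log x$ for all $x$, and must instead wait until $\log x\ge6$, or for smaller $x$ recover the saving in the term $\pi(x^{1/2})$ through $\#\{q:x^{1/2}<q<x\}\le\pi(x)-\pi(x^{1/2})$, and then confirm that the leftover intermediate range is covered. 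Fitting these pieces together so that every $x\ge2$ is handled with the constants $1.25$ and $2.5$ exactly as stated is the delicate part.
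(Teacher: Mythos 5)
Your proposal matches the paper's proof essentially step for step: the paper also bounds the sum below by $\sum_{x^{1/2}<q<x} x/q-\pi(x)$, uses the Rosser--Schoenfeld bound $\pi(x)<1.25x/\log x$ together with their explicit Mertens estimates (errors $1/(2\log^2 x)$ at $x$ and $1/(2\log^2 x^{1/2})=2/\log^2 x$ at $x^{1/2}$, giving exactly the $2.5/\log^2 x$), valid once $x\ge 286^2$, and then disposes of smaller $x$ by direct computation, just as you propose. So your approach is correct and essentially identical to the paper's.
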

\begin{proof}
First suppose that $x\ge286^2$.  We have the sum is at least
\[
\sum_{x^{1/2}<q<x}\frac xq-\pi(x).
\]
 From \cite[(3.7)]{RS}, we have that $\pi(x)<1.25x/\log x$ and from \cite[(3.17)]{RS} that
\[
\sum_{q<x}\frac1q >\log\log x +B-\frac1{2(\log x)^2},
\]
where $B$ is the Mertens constant.  Further, from \cite[(3.18)]{RS},
\[
\sum_{q\le x^{1/2}}\frac1q<\log\log x^{1/2}+B+\frac1{2(\log x^{1/2})^2}=\log\log x-\log 2+B+\frac2{(\log x)^2}.
\]
This proves the lemma in the range $x\ge286^2$ and direct calculation shows that
it holds in the wider range $x\ge2$.
\end{proof}
We shall find it useful to use the following asymptotically weaker estimates
in small cases.  The proof follows by checking values of $x\le 3213$ after which
Lemma \ref{lem:qharm} is stronger.
\begin{cor}\label{cor:qharm}
For $x\ge185$, we have $\sum_{q\in(x^{1/2},x]}\lfloor x/q\rfloor >0.5x$.
For $x\ge 67$, we have $\sum_{q\in(x^{1/2},x]}\lfloor x/q\rfloor >0.45x$.
\end{cor}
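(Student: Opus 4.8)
The plan is exactly what the statement advertises: use Lemma~\ref{lem:qharm} for large $x$ and a finite verification for small $x$. Write $S(x)=\sum_{q\in(x^{1/2},x]}\lfloor x/q\rfloor$, the sum taken over primes $q$, and put $c(x)=\log 2-1.25/\log x-2.5/(\log x)^2$. Since the sum in Lemma~\ref{lem:qharm} runs over primes $q$ with $x^{1/2}<q<x$, a subset of those counted by $S$, that lemma gives $S(x)\ge c(x)\,x$ for all real $x\ge 2$. Differentiating, $c'(x)=\bigl(1.25+5/\log x\bigr)\big/\bigl(x(\log x)^2\bigr)>0$ for $x>1$, so $c$ is increasing; and a short computation gives $c(3213)>\tfrac12$ (here $\log 3213=8.07\ldots$, so $c(3213)=0.6931\ldots-0.1548\ldots-0.0383\ldots$, just above $0.5$ — which is precisely why $3213$ is the cutoff). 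Hence $S(x)\ge c(x)x\ge c(3213)x>0.5x$ for all $x\ge 3213$, proving the first inequality there and, \emph{a fortiori}, the second.

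For $x<3213$ the corollary becomes a finite check, and the only subtlety is that $x$ need not be an integer. If $n=\lfloor x\rfloor$, then every prime $q$ with $\sqrt{n+1}<q\le n$ satisfies $x^{1/2}<q\le x$ (as $\sqrt{n+1}>\sqrt{x}$) and $\lfloor x/q\rfloor\ge\lfloor n/q\rfloor$ (as $x\ge n$); since moreover $x<n+1$, we get $S(x)\ge\sum_{\sqrt{n+1}<q\le n}\lfloor n/q\rfloor$, and it suffices to check that this last sum is $\ge 0.5(n+1)$ for the $0.5$-bound and $\ge 0.45(n+1)$ for the $0.45$-bound. A direct computation confirms $\sum_{\sqrt{n+1}<q\le n}\lfloor n/q\rfloor\ge 0.5(n+1)$ for $185\le n\le 3212$ and $\ge 0.45(n+1)$ for $67\le n\le 184$. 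Combined with the previous paragraph this gives $S(x)>0.5x$ for all real $x\ge185$ — hence also $S(x)>0.5x>0.45x$ there — and together with the range $67\le x<185$ it yields $S(x)>0.45x$ for all $x\ge 67$.

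The one place requiring care is whether the crude bound $S(x)\ge\sum_{\sqrt{n+1}<q\le n}\lfloor n/q\rfloor\ge0.5(n+1)$ really survives all the way down to $n=185$ (and the $0.45$-version down to $n=67$). If it fails for a handful of small $n$, the remedy is to note that $S(x)-0.5x$ is constant-minus-linear, hence strictly decreasing with slope $-\tfrac12$, on each interval between consecutive jumps of the step function $S$; since $S$ jumps only at the points $x=q$, $x=q^{2}$, and $x=nq$ (with $n<q$) for primes $q$, it then suffices to test the inequality immediately to the left of each such point in $[67,3213)$, again a finite list. Either way there is no genuine obstacle: the substance of the corollary is merely that the thresholds in Lemma~\ref{lem:qharm}'s regime for the constants $0.5$ and $0.45$ can be lowered to $x=185$ and $x=67$ respectively, which is a bookkeeping-plus-computation matter rather than a conceptual one.
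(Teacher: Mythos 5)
Your proposal is correct and follows essentially the same route as the paper, whose entire proof is that one checks values of $x\le 3213$ directly and notes that Lemma \ref{lem:qharm} is stronger beyond that point (indeed $\log 2-1.25/\log x-2.5/(\log x)^2>0.5$ for $x\ge 3213$, as you compute). Your reduction of the sub-$3213$ range to a finite check at integers $n$ (comparing against $0.5(n+1)$, resp.\ $0.45(n+1)$, to cover non-integer $x$) is a careful and valid way to organize the verification the paper leaves implicit.
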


\bigskip
Let
\begin{equation}
\label{eq:params}
\theta=0.3,\quad \lambda=0.7982562,\quad \nu=1/\theta=10/3.
\end{equation}
  For each prime $p$, let
\[
S_p=\sum_{\substack{t\in T\\P(t)=p<t^\theta}}\frac1{t^\lambda}.
\]
With \eqref{largeprime} it will suffice to prove Theorem \ref{thm5'} if we
show under its hypotheses that for each $Y\ge2$,
\begin{equation}
\label{eq:suff}
\sum_{p\le Y}S_p\le\sum_{p\le Y}\left(\frac1{p^\lambda}-\frac1{p^\tau}\right).
\end{equation}

\subsection{Small primes, $Y\le 37$}
We are going to estimate $S_p$ for various small primes $p$.
Take $t\in T$ with $P(t)<t^\theta$. If $t\le q^\nu$ for a prime $q$, then 
$P(t)<(q^\nu)^\theta=q$.
If $q=3$, we see there can be at most one such $t$; that is, $T$ can contain
at most one power of 2.  The values of $t\le 5^\nu$ are supported on $\{2,3\}$,
so by Lemma \ref{lem2} with $k=n=2$ we see that there are at most 2 such
members of $T$.  Similarly, Lemma \ref{lem2} with $k=2$, $n=3$ shows that
$T$ has at most 4 members below $7^\nu$, and with $k=2, n=4$, $T$ has at
most 19 members below $11^\nu$.
Since members $t$ of $T$ with $P(t)<t^\theta$ have at least $\lceil\nu\rceil=4$ prime factors (counted with multiplicity), we have
\begin{align}\label{smallY}
S_2&\le\frac1{2^{4\lambda}}<0.1093463,\notag\\
S_2+S_3&<0.1093463+\frac{2-1}{3^{\nu\lambda}}<0.1631052,\notag\\
S_2+S_3+S_5&<0.1631052+\frac{4-2}{5^{\nu\lambda}}<0.1907220,\notag\\
S_2+S_3+S_5+S_7&<0.1907220+\frac{19-4}{7^{\nu\lambda}}<0.2753295.
\end{align}

Computing $\sum_{p\le Y}(1/p^\lambda-1/p^\tau)$ directly for $Y=2,3,5,7$ gives lower bounds
\[
0.121399,~0.251741,~0.368904,~0.471733,
\]
respectively.  
Thus we observe $\sum_{p\le Y} S_p < \sum_{p\le Y}(1/p^\lambda-1/p^\tau)$, so by 
\eqref{eq:suff}, Theorem \ref{thm5'} holds 
when $Y=2,3,5,7$, respectively.  

Now consider $11\le p\le 37$. By partial summation, we have the equality
\begin{equation}
\label{eq:Sp}
S_p=\int_{p^\nu}^\infty\frac{\lambda}{z^{1+\lambda}}N_p(z)\,dz ,
\end{equation}
noting that the integral converges, since $N_p(z)\le z^{(1+\theta)/2}$ by Lemma \ref{lem3}.

We use Lemmas \ref{lem:variant} and \ref{lem:qharm} to get the upper estimates for $N_p(z)$:
\begin{align}
N_p(z)&\le \big\lfloor \sqrt{z}\big\rfloor-\sum_{\max(p,z^{1/4})<q\le \sqrt{z}}\Big\lfloor\frac{\sqrt{z}}q\Big\rfloor \label{eq:largez1},\\
N_p(z)&\le \sqrt{z}\Big(1-\log2+\frac{2.5}{\log z}+\frac{10}{(\log z)^2}\Big),\hbox{ when }p\le z^{1/4}.\label{eq:largez2}
\end{align}
We split the integral in \eqref{eq:Sp} at $p^4$.
In the first range when $z< p^4$, we bound the contribution to \eqref{eq:Sp} by splitting up into intervals $[m^2,(m+1)^2]$ and using \eqref{eq:largez1}, which gives
\begin{align}\label{eq:Sp'}
S_p':=\int_{p^\nu}^{p^4}\frac{\lambda}{z^{1+\lambda}}N_p(z)\,dz \
& \le \ \sum_{m_0<m<p^2}\int_{m^2}^{(m+1)^2}\frac{\lambda}{z^{1+\lambda}}N_p(z)\,dz \ + \
\int_{p^\nu}^{(m_0+1)^2}\frac{\lambda}{z^{1+\lambda}}N_p(z)\,dz \notag\\
&\le \ \sum_{m_0<m<p^2}\Big(\frac1{m^{2\lambda}}-\frac1{(m+1)^{2\lambda}}\Big)\Big(m-\sum_{p<q\le m}\big\lfloor\frac mq\big\rfloor\Big) \notag\\
&\qquad + \ \Big(\frac1{p^{\nu\lambda}}-\frac1{(m_0+1)^{2\lambda}}\Big)\Big(m_0-\sum_{p<q\le p^{\nu/2}}\big\lfloor\frac{m_0}{q}\big\rfloor\Big)
\end{align}
where $m_0 = \lfloor p^{\nu/2}\rfloor$.

For the second range when $z\ge p^4$, we use \eqref{eq:largez2} when $z\ge3213^2$ and for smaller values of $z$ we use Corollary \ref{cor:qharm}. That is,
\begin{align*}
S_p'' :=  \int_{p^4}^\infty\frac{\lambda}{z^{1+\lambda}}N_p(z)\,dz \ 
& \le \int_{3213^2}^\infty\frac{\lambda}{z^{\lambda+1/2}}\Big(1-\log2+\frac{2.5}{\log z}+\frac{10}{(\log z)^2}\Big)\,dz\\
& \qquad + \ 0.5\int_{\max(p^4, 185^2)}^{3213^2}\frac{\lambda}{z^{1/2+\lambda}}\,dz
 \ + \ 0.55\int_{p^4}^{\max(p^4, 185^2)}\frac{\lambda}{z^{1/2+\lambda}}\,dz.
\end{align*}
Denote the integrals
\begin{align*}
f(y) & := \int_y^\infty\frac{\lambda}{z^{\lambda+1/2}}\,dz \\
g(y) & := \int_y^\infty\frac{\lambda}{z^{\lambda+1/2}}\Big(1-\log2+\frac{2.5}{\log z}+\frac{10}{(\log z)^2}\Big)\,dz.
\end{align*}
So we obtain
\begin{align}\label{eq:Sp''}
S_p'' & \le \ (1-\log 2)f(3213^2) + g(3213^2) \notag\\
& \quad + \ 0.5[f(\max(p^4,185^2))-f(3213^2)] \ + \ 0.55[f(p^4)-f(\max(p^4,185^2))] \notag\\
& \ = \ (0.5-\log 2)f(3213^2) + g(3213^2) \ - \ 0.05f(\max(p^4,185^2)) \ + \ 0.55f(p^4).
\end{align}

Using the estimates in \eqref{eq:Sp'}, \eqref{eq:Sp''}, we bound $S_p = S_p' + S_p''$ by the following. 
\medskip

\begin{center}
\renewcommand{\arraystretch}{1.5}
    \begin{tabular}{|c| c| c |c|}
    \hline
    $p$&$S_p\le$& $ \sum_{q\le p}S_q\le$&~$\sum_{q\le p}(q^{-\lambda}-q^{-\tau})\ge$\\
    \hline
     11&0.13259&0.40792&0.55427\\
    13&0.11241&0.52033&0.62966\\
    17&0.08382&0.60415&0.69432\\
    19&0.07601&0.68016&0.75484\\
    23&0.06194&0.74210&0.80868\\
    29&0.04757&0.78967&0.85521\\
    31&0.04501&0.83468&0.89978\\
    37&0.03680&0.87148&0.93950\\
    \hline
    \end{tabular}
\end{center}
\medskip
\noindent
Note that the first entry in the third column is found by adding $S_{11}$ to the
estimate in \eqref{smallY}.    Since the entries in the fourth column
exceed the entries in the third column, \eqref{eq:suff} implies
Theorem \ref{thm5'} for $Y\le 37$.

\subsection{Large primes, $Y\ge 41$}
Now assume that $Y\ge41$.  We have via partial summation that
\begin{align*}
\sum_{\substack{t\in T\\P(t)<t^\theta}}\frac1{t^\lambda}=\sum_{p\le7}S_p
+\sum_{11\le p\le 23}\int_{p^\nu}^{29^\nu}\frac{\lambda}{z^{1+\lambda}}N_p(z)\,dz
+\int_{29^\nu}^\infty\frac{\lambda}{z^{1+\lambda}}N(z)\,dz.
\end{align*}
(As before, the last integral converges.)
 From \eqref{smallY} the $S_p$ terms contribute at most 0.27533. Using Lemmas \ref{lem3}, \ref{lem:variant}, and \ref{lem:qharm}, and Corollary \ref{cor:qharm},
we similarly obtain
 \begin{align*}
 \sum_{\substack{t\in T\\P(t)<t^\theta}}&\frac1{t^\lambda}\\
 &<0.27533+0.08455+0.06576
 +0.03756+0.02953+0.01487+0.45614=0.96374,
 \end{align*}
 where the second to the sixth terms correspond to the five finite integrals,
 and the last term is our estimate for the tail integral.  We also note that
 \[
  \sum_{p\le Y}\Big(\frac1{p^\lambda}-\frac1{p^\tau}\Big)\ge
 \sum_{p\le41}\Big(\frac1{p^\lambda}-\frac1{p^\tau}\Big)>0.97661.
 \]
Since this estimate exceeds the prior one, this gives Theorem \ref{thm5'}
with $\lambda=0.7982562$.

\section*{Acknowledgments} 
The authors would like to acknowledge the University of Memphis for hosting the 2019 Erd\H{o}s Lecture Series, during which the initial ideas for this paper were conceived.  The second author was supported by a Clarendon Scholarship at
the University of Oxford.
The third author is grateful for the
hospitality of Santa Clara University and was funded by their Paul R. and Virginia P.
Halmos Endowed Professorship in Mathematics and Computer Science.
The authors thank the referee for a careful reading.

\bibliographystyle{amsplain}

\begin{thebibliography}{9}

\bibitem{BM} W.D. Banks and G. Martin, Optimal primitive sets with restricted primes, {\it Integers} {\bf 13} (2013), \#A69, 10 pp.
    
\bibitem{C11} T.H. Chan, On sets of integers, none of which divides the product of $k$ others, {\it European J. Comb.} {\bf 32} (2011), 443--447.
    
\bibitem{CGS} T.H. Chan, E. Gy\H{o}ri and A. S\'{a}rk\"{o}zy, On a problem of Erd\H{o}s on integers, none of which divides the product of $k$ others, {\it European J. Comb.} {\bf 31} (2010), 260--269.

\bibitem{E35} P. Erd\H{o}s, Note on sequences of integers no one of which is divisible by any other, {\it J. London Math. Soc.} {\bf 10} (1935), 126--128.

\bibitem{ES} P. Erd\H os and G. Szekeres, A combinatorial problem in geometry, {\it Compositio Math.} {\bf2} (1935),
463--470.

\bibitem{E38} P. Erd\H{o}s, On sequences of integers no one of which divides the product of two others and on some related problems, {\it Tomsk. Gos. Univ. Ucen. Zap.} {\bf 2} (1938), 74--82.

\bibitem{E48} P. Erd\H{o}s, On the integers having exactly $k$ prime factors, {\it Annals of Math.} {\bf 49} (1948), 53--66.

\bibitem{ESS} P. Erd\H{o}s, A. S\'{a}rk\"{o}zy and E. Szemer\'{e}di, On an extremal problem concerning primitive sequences, {J. London Math. Soc.} {\bf 42} (1967), 484-488.

\bibitem{LP} J.D. Lichtman and C. Pomerance, The Erd\H{o}s conjecture for primitive sets, {\it Proc. Amer. Math. Soc.}, Series B {\bf 6} (2019), 1--14.

\bibitem{PS} P.P. Pach and Cs. S\'{a}ndor, Multiplicative bases and an Erd\H{o}s problem, {\it Combinatorica} {\bf 38} (2018), 1175--1203.

\bibitem{RS}  J.B. Rosser and L. Schoenfeld, Approximate formulas for some functions of prime numbers, {\it Illinois J. Math.} {\bf 6} (1962), 64--94.

\end{thebibliography}


\begin{dajauthors}
\begin{authorinfo}[THC]
  Tsz Ho Chan\\
  Kennesaw State University\\
  Kennesaw, GA 30144\\
  thchan6174\imageat{}gmail\imagedot{}com \\
  \url{https://www.memphis.edu/msci/people/tchan.php}
\end{authorinfo}
\begin{authorinfo}[JDL]
  Jared Duker Lichtman\\
  University of Oxford\\
  Oxford, OX2 6GG, UK\\
  jared\imagedot{}d\imagedot{}lichtman\imageat{}gmail\imagedot{}com \\
  \url{https://www.maths.ox.ac.uk/people/jared.lichtman}
\end{authorinfo}
\begin{authorinfo}[CP]
  Carl Pomerance\\
  Dartmouth College\\
  Hanover, NH 03755\\
  carl\imagedot{}pomerance\imageat{}dartmouth\imagedot{}edu\\
  \url{https://math.dartmouth.edu/~carlp/}
\end{authorinfo}
\end{dajauthors}

\end{document}